\newtheorem{theorem}{Theorem}[section]
\theoremstyle{plain}
\newtheorem{conjecture}[theorem]{Conjecture}
\newtheorem{corollary}[theorem]{Corollary}
\newtheorem{lemma}[theorem]{Lemma}
\newtheorem{proposition}[theorem]{Proposition}
\theoremstyle{remark}
\newtheorem{example}[theorem]{Example}
\numberwithin{equation}{section}
\newcommand{\Kcal}{\mathscr{K}}
\newcommand{\Tcal}{\mathscr{T}}
\newcommand{\Z}{\mathbb{Z}}
\newcommand{\C}{\mathbb{C}}
\newcommand{\F}{\mathbb{F}}
\newcommand{\Q}{\mathbb{Q}}
\newcommand{\rad}{\mathrm{rad}}
\newcommand{\rk}{\mathrm{rk}}
\newcommand{\supp}{\mathrm{supp}}
\newcommand{\Der}{\mathrm{Der}}
\newcommand{\Hom}{\mathrm{Hom}}
\newcommand{\dd}{\mathrm{d}}
  \DeclareFontFamily{U}{wncy}{}
    \DeclareFontShape{U}{wncy}{m}{n}{<->wncyr10}{}
    \DeclareSymbolFont{mcy}{U}{wncy}{m}{n}
    \DeclareMathSymbol{\Sha}{\mathord}{mcy}{"58}
\begin{document}
\title[]{Arithmetic derivatives through geometry of numbers}

\author{Hector Pasten}
\address{ Departamento de Matem\'aticas,
Pontificia Universidad Cat\'olica de Chile.
Facultad de Matem\'aticas,
4860 Av.\ Vicu\~na Mackenna,
Macul, RM, Chile}
\email[H. Pasten]{hpasten@gmail.com}%

\thanks{This research was supported by ANID (ex CONICYT) FONDECYT Regular grant 1190442 from Chile.}

\date{\today}
\subjclass[2010]{Primary: 11J97; Secondary: 11H06, 14A23} %
\keywords{Arithmetic derivative, $abc$ Conjecture, Geometry of Numbers}%

\begin{abstract} We define certain arithmetic derivatives on $\mathbb{Z}$ that respect the Leibniz rule, are additive for a chosen equation $a+b=c$, and satisfy a suitable non-degeneracy condition. Using Geometry of Numbers, we unconditionally show their existence with controlled size. We prove that any power-saving improvement on our size bounds would give a version of the $abc$ Conjecture. In fact, we show that the existence of sufficiently small arithmetic derivatives in our sense is equivalent to the $abc$ Conjecture. Our results give an explicit manifestation of an analogy suggested by Vojta in the eighties, relating Geometry of Numbers in arithmetic to derivatives in function fields and Nevanlinna theory. In addition, our construction formalizes the widespread intuition that the $abc$ Conjecture should be related to arithmetic derivatives of some sort.
\end{abstract}

\maketitle

\setcounter{tocdepth}{1}

\tableofcontents


\section{Introduction}

\subsection{A map satisfying the Leibniz rule} There is great interest in constructing derivatives on $\Z$ behaving like derivatives on function fields, as they are expected to have remarkable applications. For instance, the arithmetic analogue of the Mason-Stothers  theorem is the $abc$ Conjecture, but the proof for polynomials heavily uses derivatives and  it is unclear how to adapt it to $\Z$. 

Let us discuss a first attempt by focusing only on the Leibniz rule. For each prime $p$ let $v_p$ denote the $p$-adic valuation on $\Q$ and let $\xi_p$ be a variable. Let $\Omega$ be the free $\Z$-module generated by the variables $\xi_p$. Let $\dd : \Z\to \Omega$ be the map defined by $\dd 0=0$ and by
$$
\dd n =n\sum_{p|n} \frac{v_p(n)}{p}\cdot \xi_p 
$$
for $n\ne 0$, where $p$ varies over the different prime divisors of $n$. (A version of  $\dd: \Z\to \Omega$ and generalizations can be found in \cite{KOW}.) Note that $n\cdot v_p(n)/p\in \Z$ when $p|n$, so $\dd n\in \Omega$ for all $n\in \Z$. In particular, when $p$ is prime we get $\dd p=\xi_p$. One immediately checks
\begin{lemma}[Leibniz rule for $\dd$]\label{LemmaLeibnizd} For all $a,b\in \Z$ we have $\dd(ab)=a\dd b + b\dd a$.
\end{lemma}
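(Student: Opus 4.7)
The proof is a direct computation based on the additivity of the $p$-adic valuation. The plan is to first dispose of the degenerate cases where $a=0$ or $b=0$, in which $\dd 0=0$ forces both sides of the identity to vanish.

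For the main case $a,b\neq 0$ (hence $ab\neq 0$), I would expand $\dd(ab)$ using the definition and then split the sum according to the contributions coming from $a$ and from $b$. Concretely, since $v_p$ is a valuation, $v_p(ab)=v_p(a)+v_p(b)$ for every prime $p$, so
$$
\dd(ab)=ab\sum_{p\mid ab}\frac{v_p(a)+v_p(b)}{p}\cdot \xi_p = ab\sum_{p\mid ab}\frac{v_p(a)}{p}\cdot \xi_p + ab\sum_{p\mid ab}\frac{v_p(b)}{p}\cdot \xi_p.
$$
The key observation is that in the first sum, any prime $p$ dividing $ab$ but not $a$ satisfies $v_p(a)=0$ and hence contributes nothing; so the index set may be restricted to primes dividing $a$. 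Symmetrically the second sum can be restricted to primes dividing $b$. Recognizing the resulting expressions as $b\cdot\dd a$ and $a\cdot\dd b$ respectively yields the Leibniz identity.

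There is essentially no obstacle: the entire argument reduces to the additivity of $v_p$ and a relabeling of the index set of a finite sum. The only care needed is checking that the integrality assertion $n\cdot v_p(n)/p\in\Z$ (used implicitly when writing $\dd(ab)\in\Omega$) is compatible with the split, which is automatic since $p\mid ab$ whenever $v_p(a)+v_p(b)>0$. Thus I expect the proof to fit in a few lines.
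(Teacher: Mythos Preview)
Your proof is correct and is exactly the direct verification the paper has in mind; the paper itself omits the argument entirely, merely stating that one immediately checks the identity. The computation you outline---using $v_p(ab)=v_p(a)+v_p(b)$ and dropping the primes with zero valuation from each sum---is the natural (and essentially only) way to verify it.
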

In fact, there is a sense in which $\dd:\Z\to \Omega$ is the universal map on $\Z$ satisfying the Leibniz rule, see Section \ref{SecFinal}. Unfortunately, this map $\dd$ is not a good analogue of a derivative because it is not additive: For instance, $\dd(1)=0$, $\dd(2)=\xi_2$, and $\dd(3)=\xi_3$ but we certainly have $0+\xi_2\ne \xi_3$.

\subsection{Arithmetic derivatives}  The starting point of our work is the following suggestion due to Thanases Pheidas: When  derivatives are applied in function field arithmetic, it is often the case that additivity is only needed finitely many times. Thus, one might still assign values to the variables $\xi_p$  in order to make $\dd$ additive  in the finitely many needed cases. For instance, in our previous example we may replace $\xi_2$ and $\xi_3$ by $1$ to get $0+1=1$ from the equation $1+2=3$.

Our aim is to investigate this construction in the simplest non-trivial case: When exactly one additive condition is imposed. For this, it is convenient to give an algebraic formulation of Pheidas's suggestion.

Consider a group morphism $\psi:\Omega\to \Z$.  The \emph{arithmetic derivative} $\dd^\psi$ attached to  $\psi$ is simply defined as $d^\psi=\psi\circ \dd:\Z\to \Z$. Note that  $\dd^\psi:\Z\to \Z$ still respects the Leibniz rule.

Given coprime positive integers $a,b,c$ with $a+b=c$, the condition $\dd^\psi (a)+\dd^\psi (b) = \dd^\psi (c)$ imposes a linear equation on the values $\psi(\xi_p)$. When $c>2$, the set of all such maps $\psi$ satisfying $\psi(\xi_p)=0$ whenever $p\nmid abc$, turns out to be a non-trivial free abelian group, cf. Lemma \ref{LemmaExistence}. We denote this group by $\Tcal(a,b)$. With this notation, one can ask to what extent an arithmetic derivative $\dd^\psi$ for $\psi\in \Tcal(a,b)$ can be used to mimic arguments from function field arithmetic.


\subsection{The Small Derivatives Conjecture} Let us focus our attention on a particular kind of morphism $\psi:\Omega\to \Z$. For us, a \emph{derivation} is a group morphism $\psi:\Omega\to \Z$ satisfying that its norm $\|\psi\|:=\sup_p |\psi(\xi_p)|$ is finite. The set of all such maps is a $\Z$-module denoted by $\Tcal$ which comes equipped with the norm $\|-\|$. The previously defined groups $\Tcal(a,b)$ are contained in $\Tcal$.

Besides these definitions, we also introduce the notion of \emph{$\psi$-independence} for a pair of integers $(a,b)$ and a derivation $\psi$, by requiring that the \emph{arithmetic Wronskian} $W^\psi(a,b)=a\dd^\psi b-b\dd^\psi a$ is non-zero.  Our study focuses on the question of existence of small (in the sense of $\|-\|$) derivations $\psi\in \Tcal(a,b)$ satisfying that $a,b$ are $\psi$-independent.  We propose the following
\begin{conjecture}[Small Derivatives Conjecture, cf. Conjecture \ref{ConjSDC}] There is an absolute constant $0<\eta<1$ such that for all but finitely many triples of coprime positive integers $(a,b,c)$ satisfying $a+b=c$ and not of the form $(1,N, q)$  with $q$ prime (up to order), the following holds: There is $\psi\in \Tcal(a,b)$ such that $a,b$ are $\psi$-independent and $\|\psi\|  < c^{\eta}$.
\end{conjecture}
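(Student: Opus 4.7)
My plan is to realize $\Tcal(a,b)$ as a lattice and extract a short $\psi$ via geometry of numbers. Once we impose $\psi(\xi_p) = 0$ for $p \nmid abc$, the group $\Tcal(a,b)$ becomes a sublattice of $\Z^{\omega(abc)}$ cut out by the single linear relation $\dd^\psi a + \dd^\psi b = \dd^\psi c$. Pairwise coprimality of $a,b,c$ partitions the primes dividing $abc$ into disjoint sets $S_a, S_b, S_c$, and the defining equation reads
\[
\sum_{p \in S_a} \frac{a\, v_p(a)}{p}\, \psi_p \;+\; \sum_{p \in S_b} \frac{b\, v_p(b)}{p}\, \psi_p \;-\; \sum_{p \in S_c} \frac{c\, v_p(c)}{p}\, \psi_p \;=\; 0,
\]
with all coefficients of size $O(c \log c)$. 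Hence $\Tcal(a,b)$ has rank $\omega(abc) - 1$ and covolume $O(c \log c)$ in $\Z^{\omega(abc)}$, and Minkowski's first theorem produces a nonzero $\psi_0 \in \Tcal(a,b)$ with $\|\psi_0\| \ll (c \log c)^{1/(\omega(abc) - 1)}$.

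Next I reduce to $\omega(abc) \geq 3$. Pairwise coprimality forces any $\omega(abc) = 2$ triple into the shape $(1, p^m, q^n)$; the excluded shape $(1, N, q)$ with $q$ prime handles the cases $m = 1$ and $n = 1$, and Mihailescu's theorem leaves only $(1,8,9)$, which is absorbed into the finite exception. When $\omega(abc) \geq 3$ the Minkowski bound gives $\|\psi_0\| \ll c^{1/2 + o(1)}$, comfortably below $c^\eta$ for any $\eta > 1/2$. The $\psi$-independence condition $W^\psi(a,b) \neq 0$ amounts to $\psi$ avoiding the codimension-one sublattice $\Tcal_{\mathrm{bad}} \subset \Tcal(a,b)$ carved out by $\sum_{p \mid a}(v_p(a)/p)\psi_p = \sum_{p \mid b}(v_p(b)/p)\psi_p$. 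Writing the successive minima of $\Tcal(a,b)$ as $\lambda_1 \leq \cdots \leq \lambda_{\omega(abc) - 1}$, at most $\omega(abc) - 2$ independent minimizers can lie in $\Tcal_{\mathrm{bad}}$, so some valid $\psi$ exists with $\|\psi\| \leq \lambda_{\omega(abc) - 1}$, and Minkowski's second theorem bounds $\lambda_1 \cdots \lambda_{\omega(abc) - 1} \ll c \log c$.

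The hard step is bounding $\lambda_{\omega(abc) - 1}$ rather than merely $\lambda_1$. If the successive minima are balanced, then $\lambda_{\omega(abc) - 1} \ll c^{1/(\omega(abc) - 1) + o(1)}$ and we are done for every $\omega(abc) \geq 3$. The pathology is a collapse of $\lambda_1$: for instance when $\omega(abc) = 3$ and the primitive generator $v$ of $\Tcal_{\mathrm{bad}}$ is the global minimum of $\Tcal(a,b)$ with $\|v\| = O(1)$, the identity $\lambda_1 \lambda_2 \asymp c$ only yields $\lambda_2 \ll c$, far too weak for $c^\eta$. Ruling out such collapses uniformly — equivalently, forcing $\lambda_1 \gg c^\delta$ on the lattices attached to non-exceptional triples — is the genuinely arithmetic input that the geometry-of-numbers machinery cannot supply on its own; in view of the equivalence announced in the abstract, this input is tantamount to a form of the $abc$ conjecture. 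Accordingly my concrete plan is to prove the Minkowski bound unconditionally, and then invoke $abc$ to preclude the collapse cases, thereby reducing the Small Derivatives Conjecture to (and deducing it from) $abc$ itself.
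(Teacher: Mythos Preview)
Your plan matches the paper's conditional argument (Theorem \ref{ThmEquiv}) in outline: realize $\Tcal(a,b)$ as a rank-$r$ lattice with $r=\omega(abc)-1$, apply Minkowski's second theorem to get independent $\psi_1,\dots,\psi_r$ with $\prod_i\|\psi_i\|\ll c\log c$, observe that the bad locus $\Tcal^\circ(a,b)$ has rank $r-1$, and split into two cases according to whether the first $r-1$ short vectors all lie in $\Tcal^\circ(a,b)$. The paper's case (a) is sharper than your ``balanced minima'' heuristic: if the least index $i_0$ with $\psi_{i_0}\notin\Tcal^\circ(a,b)$ satisfies $i_0<r$, then $\|\psi_{i_0}\|^2\le\prod_{i\ge i_0}\|\psi_i\|\le c^{1+\epsilon}$ already gives exponent $1/2+\epsilon$, with no balance hypothesis.

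The genuine gap is in your final paragraph. You correctly identify that the obstruction is the case where $\psi_1,\dots,\psi_{r-1}$ all lie in $\Tcal^\circ(a,b)$, but ``invoke $abc$ to preclude the collapse'' is not a proof, and your target ``$\lambda_1\gg c^\delta$'' is not the right formulation. What the paper actually proves (Lemma \ref{LemmaKey}) is a lower bound on the \emph{product} $\prod_{i=1}^{r-1}\|\psi_i\|$ for any independent $\psi_1,\dots,\psi_{r-1}\in\Tcal^\circ(a,b)$, and the mechanism is arithmetic, not lattice-theoretic: the defining equations of $\Tcal^\circ(a,b)$ force $p\mid v_p(abc)\psi(\xi_p)$ for every prime $p\mid abc$ and every $\psi\in\Tcal^\circ(a,b)$. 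A combinatorial selection argument (expanding a nonvanishing determinant) then assigns distinct primes $p_i$ to the $\psi_i$ with $\psi_i(\xi_{p_i})\ne 0$, yielding $\prod_i\|\psi_i\|\ge\prod_i p_i/\prod_p v_p(abc)$. The product of $r-1$ distinct prime factors of $abc$ is then compared to $\rad(abc)$, and only at this last step does the $abc$ hypothesis convert $\rad(abc)$ into a power of $c$. The case analysis (whether $ab$ and $c$ each have two prime factors, or one of $a,b,c$ is a prime power) and the role of the excluded triples $(1,N,q)$ are handled there. You have not supplied this divisibility observation, and without it there is no bridge from $abc$ to the lattice lower bound you need.
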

This conjecture seems to capture the usefulness of derivatives in function field arithmetic in the sense that it allows one to translate arguments from function fields to $\Z$, provided that additivity of derivatives is used just once. In order to clarify how to use our arithmetic derivatives together with the Small Derivatives Conjecture to perform such a translation, in Section \ref{SecFLT} we give a short proof of the analogue of Fermat's Last Theorem for $\C[x]$ based on derivatives without using the Mason-Stothers theorem or radicals, and then we translate the argument to $\Z$. We conclude that the Small Derivatives Conjecture implies the asymptotic form of Fermat's Last Theorem.

The connection with Fermat's Last Theorem is of course just an example to clarify the analogy between our arithmetic derivatives and the usual function field derivatives. Actually, our main goal is to show that the Small Derivatives Conjecture is equivalent to the $abc$ Conjecture (with a suitable choice of exponents). Let us give a brief outline of the main results.


\subsection{Main results}
In Theorem \ref{ThmExistence} we will use geometry of numbers to show that $\Tcal(a,b)$ admits a full set of linearly independent derivations with controlled norm. In Theorem \ref{Thmabc} we prove an unconditional $abc$-type  bound which explicitly includes a contribution coming from the norm of arithmetic derivatives.  This motivates the problem of producing $\psi\in \Tcal(a,b)$ for a given pair of coprime positive integers $(a,b)$ such that $\|\psi\|$ is small and $a,b$ are $\psi$-independent. We prove such a result in Lemma \ref{LemmaSmall} but unfortunately it is insufficient to prove the $abc$ Conjecture. Nevertheless, this analysis motivates a heuristic (cf. Section \ref{SecHeuristic}) leading to the formulation of the Small Derivatives Conjecture discussed above. As for evidence, besides Lemma \ref{LemmaSmall} and the heuristic in Section \ref{SecHeuristic}, we prove a  version of the Small Derivatives Conjecture with exponent $\eta=1/2 +\epsilon$, provided that the $\psi$-independence condition is replaced by a somewhat weaker non-degeneracy condition, see Theorem \ref{ThmNonZero}.

Our main results concerning the arithmetic relevance of these notions are Lemma \ref{LemmaEquiv} and Theorem \ref{ThmEquiv}; see also Corollary \ref{CoroEquiv}. These results show  that the Small Derivatives Conjecture is equivalent to the $abc$-conjecture, with a precise dependence of exponents. 


\subsection{Some algebraic context} In Section \ref{SecFinal} we include a discussion on a generalization of the constructions $\Omega$ and $\Tcal$ from an algebraic point of view. Consider a commutative monoid $R$, a commutative unitary ring $A$, and a morphism of monoids $\alpha:R\to A$ where $A$ is taken as a multiplicative monoid. For an $A$-module $U$, we say that a map $D:R\to U$ is an \emph{$\alpha$-derivation} (with values in $U$) if $D(\alpha(r))=0$ for every $r\in R$ and $D(ab)=aD(b)+bD(a)$ for all $a,b\in A$.

We will construct a universal $\alpha$-derivation $\dd_{(A,\alpha)}: A\to \Omega_{(A,\alpha)}$ and compute it in some examples. One of these examples shows that our map $\dd:\Z\to \Omega$ is precisely the universal $\alpha$-derivation on $\Z$ for the inclusion map $\alpha:\{-1,1\}\to \Z$. So, in this sense, the map $\dd:\Z\to \Omega$ is not artificial.  

Our notion of $\alpha$-derivations is very similar to the theory of absolute derivations from \cite{KOW}, except that we keep track of the additional data of a morphism of monoids $\alpha:R\to A$ ---in fact, when $R=\{1\}$ we recover the absolute derivations from \cite{KOW}. 

The additional data of a morphism of monoids is natural from various points of view. First, in our arithmetic applications it corresponds to restricting the support of the derivations $\psi\in \Tcal$, which was necessary in the definition of $\Tcal(a,b)$. Secondly, one can check compatibility with localization of our $\alpha$-derivations, leading to sheaves of $\alpha$-derivations on pre-log schemes (although we do not pursue this direction in this work). From this point of view, our modules $\Tcal(a,b)$ give normed sheaves on $\mathrm{Spec}(\Z)$ endowed with a suitable pre-log structure. Finally,  monoids are often considered as the most basic ``ground field'' in the general $\F_1$ philosophy, which motivates the construction of derivatives on $\Z$ by requiring compatibility with  monoids rather than requiring linearity.


\subsection{Remarks on arithmetic derivatives}  In summary, this work formalizes the widespread intuition that some sort of arithmetic derivative on $\Z$ should be closely related to the $abc$ Conjecture.  Our results  are in line with Vojta's proposed analogy comparing Geometry of Numbers in arithmetic to derivatives in the setting of function fields and Nevanlinna theory; see Chapter 6 in \cite{VojtaThesis}. We stress the fact that ---despite the close relation with more sophisticated concepts such as ``geometry over $\F_1$''--- our constructions only involve classical tools.

It is worth pointing out that Vojta has a different proposal for arithmetic derivatives in terms of the existence of small rational points in the total space of certain projective bundles (the \emph{Tautological Conjecture}, cf. Section 30 in \cite{VojtaCIME}). Also, Faltings \cite{FaltingsKS} investigated yet another possible notion of arithmetic derivative in terms of certain axiomatically defined arithmetic analogue of the Kodaira-Spencer class for fibrations, showing that such an object cannot exist.

Finally, we mention that Buium (see \cite{Buium} and the references therein) developed a theory of $p$-derivations, which affords some analogies between differential calculus and the arithmetic of local fields. Buium's $p$-derivations, however, are purely local and they do not seem to be related to the global notion of arithmetic derivative in the present work.


\section{Derivations and arithmetic derivatives}

\subsection{The module $\Tcal$ and arithmetic derivatives} Recall (from the Introduction) that $\Omega$ is the free $\Z$-module generated by the variables $\xi_p$ for $p$ varying over prime numbers. For a $\Z$-linear map $\psi:\Omega\to \Z$ we define $\|\psi\|  = \sup_p |\psi(\xi_p)|$. We will often  use the observation that if $\psi\ne 0$ then $\|\psi\|\ge 1$. Let
$$
\Tcal = \{\psi\in \Hom_\Z(\Omega,\Z) : \|\psi\| \mbox{ is finite}\}.
$$
Elements of $\Tcal$ will be called \emph{derivations} and $\|-\| $ is a norm on the $\Z$-module $\Tcal$. 

Given a derivation $\psi\in \Tcal$ we define the \emph{arithmetic derivative} attached to $\psi$ as the map
$$
\dd^\psi:\Z\to \Z \quad\mbox{defined by } \quad \dd^\psi:=\psi\circ \dd.
$$

For example, the classical ``arithmetic derivative'' that one encounters in elementary number theory \cite{MingotShelly, Barbeau} is precisely $\dd^{\sigma}$ where $\sigma(\sum_p a_p\xi_p)=\sum_p a_p$  ---note that $\|\sigma\| =1$ so $\sigma\in \Tcal$.

Returning to the general case, observe that upon composing with $\psi\in \Tcal$, Lemma \ref{LemmaLeibnizd} gives
\begin{lemma}[Leibniz rule for arithmetic derivatives]\label{LemmaLeibnizAD} Let $\psi\in \Tcal$. For every $a,b\in \Z$ we have $\dd^\psi(ab)=a\dd^\psi b +b\dd^\psi a$. Thus, for all integers $n\ge 1$ and all $a\in \Z$ we have $\dd^\psi(a^n)=na^{n-1}\dd^\psi a$.
\end{lemma}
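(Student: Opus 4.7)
The plan is to reduce both assertions to Lemma \ref{LemmaLeibnizd} and the $\Z$-linearity of $\psi$. Concretely, for any $a,b\in\Z$, Lemma \ref{LemmaLeibnizd} gives the identity $\dd(ab)=a\dd b+b\dd a$ in $\Omega$. Applying the group homomorphism $\psi:\Omega\to\Z$ to both sides and using that $\psi$ is $\Z$-linear (so it commutes with multiplication by the integers $a$ and $b$) yields
$$
\dd^\psi(ab)=\psi(\dd(ab))=\psi(a\dd b+b\dd a)=a\psi(\dd b)+b\psi(\dd a)=a\dd^\psi b+b\dd^\psi a,
$$
which is the first claim. No subtlety is involved: the whole statement is just the functorial image under $\psi$ of the Leibniz rule already established for $\dd$.

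For the power rule, I would proceed by induction on $n\ge 1$. The base case $n=1$ is the tautology $\dd^\psi(a)=1\cdot a^0\cdot \dd^\psi a$. For the inductive step, assuming $\dd^\psi(a^n)=na^{n-1}\dd^\psi a$, apply the Leibniz rule just proved to the product $a^{n+1}=a\cdot a^n$:
$$
\dd^\psi(a^{n+1})=a\dd^\psi(a^n)+a^n\dd^\psi a=a\cdot na^{n-1}\dd^\psi a+a^n\dd^\psi a=(n+1)a^n\dd^\psi a,
$$
completing the induction. There is no real obstacle here; the only point to be careful about is that $\psi$ is only assumed to be a morphism of $\Z$-modules (equivalently, of abelian groups), not a ring map, so one must interpret the ``$a$'' and ``$b$'' appearing on the right-hand side of $\dd(ab)=a\dd b+b\dd a$ as integer scalars acting on the $\Z$-module $\Omega$, which is exactly the setting in which $\Z$-linearity of $\psi$ applies.
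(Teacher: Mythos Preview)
Your proof is correct and follows exactly the approach indicated in the paper: the paper simply remarks that the lemma follows by composing the identity of Lemma~\ref{LemmaLeibnizd} with $\psi$, which is precisely what you do, and your induction for the power rule just makes explicit what the paper leaves to the reader.
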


Concerning norms, the following estimates are useful.

\begin{lemma}\label{Lemmavbd} For every positive integer $n$ we have $\sum_{p|n}v_p(n)/p\le (2\log 2)^{-1} \log n$. In particular, if $n\ge 2$ and $\psi\in \Tcal$,  then $|\dd^\psi(n)| < \|\psi\|\cdot n\log n$.
\end{lemma}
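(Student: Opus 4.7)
The plan is to prove the arithmetic bound by a termwise comparison over primes, then read off the bound on $|\dd^\psi n|$ by unpacking the definition of $\dd^\psi$.

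For the first inequality I would reduce to the termwise statement $1/p \le \log p/(2\log 2)$, equivalently $2\log 2 \le p\log p$, for each prime $p$. This is an equality at $p=2$ and, because $x\log x$ has derivative $\log x+1$ which is positive for $x\ge 1$, the function $x\log x$ is strictly increasing on $[1,\infty)$, so the inequality extends to all primes $p\ge 2$. Multiplying by the non-negative integer $v_p(n)$ and summing over $p\mid n$ then yields
$$
\sum_{p\mid n}\frac{v_p(n)}{p}\;\le\;(2\log 2)^{-1}\sum_{p\mid n}v_p(n)\log p\;=\;(2\log 2)^{-1}\log n,
$$
where the last equality is unique factorization.

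For the second assertion I would apply the $\Z$-linear map $\psi$ to the defining formula $\dd n=n\sum_{p\mid n}(v_p(n)/p)\xi_p$ preceding Lemma~\ref{LemmaLeibnizd}, obtaining
$$
\dd^\psi n\;=\;n\sum_{p\mid n}\frac{v_p(n)}{p}\,\psi(\xi_p).
$$
The triangle inequality together with the bound $|\psi(\xi_p)|\le \|\psi\|$ (from the definition of $\|\psi\|$) and the first part then give $|\dd^\psi n|\le \|\psi\|\cdot n\cdot (2\log 2)^{-1}\log n$. Since $(2\log 2)^{-1}<1$ and $\log n>0$ for $n\ge 2$, the strict inequality $|\dd^\psi n|<\|\psi\|\cdot n\log n$ follows whenever $\|\psi\|>0$; the case $\psi=0$ may be set aside, since then both sides vanish.

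No genuine obstacle is anticipated. The single substantive observation is that the function $x\log x$ attains its minimum over primes at $p=2$, which is what produces the sharp constant $(2\log 2)^{-1}$; the strictness in the second assertion then comes for free from the numerical gap $(2\log 2)^{-1}<1$.
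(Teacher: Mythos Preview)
Your proof is correct and follows essentially the same route as the paper: both arguments rest on the observation that $1/(p\log p)$ is maximized at $p=2$, yielding the termwise bound $v_p(n)/p \le (2\log 2)^{-1}\,v_p(n)\log p$, and then sum to $\log n$ by unique factorization. Your handling of the $\psi=0$ edge case is in fact more careful than the paper's, which glosses over the fact that the strict inequality degenerates to $0<0$ there.
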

\begin{proof} We can assume $n\ge 2$. Then we get
$$
\sum_{p|n}\frac{v_p(n)}{p}=\sum_{p|n}v_p(n)\log p\cdot \frac{1}{p\log p}\le \left(\max_{p|n}\frac{1}{p\log p}\right)\log n\le \frac{\log n}{2\log 2}.
$$
The last claim is immediate from $\dd^\psi(n)=n\sum_{p|n} v_p(n)p^{-1} \psi(\xi_p)$.
\end{proof}

\subsection{The modules $\Tcal(a,b)$} The \emph{support} of  $\psi\in \Tcal$ is the set of primes $\supp(\psi)=\{p : \psi(\xi_p)\ne 0\}$. The support of a non-zero integer $n$ is $\supp(n)=\{p : p|n\}$ and the number of different prime factors is $\omega(n)=\# \supp(n)$. We recall  the following elementary fact:

\begin{lemma}\label{Lemmaw} We have $\omega(n)=O(\log(n)/\log \log n)$. In particular, for each $\epsilon>0$ we have the bound  $\omega(n)<\epsilon \log n$ for all but finitely many positive integers $n$.
\end{lemma}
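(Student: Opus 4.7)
The proof plan is to bound $n$ from below in terms of $k := \omega(n)$ by using that $n$ is at least the product of the first $k$ primes, and then invert this bound.

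\medskip

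\emph{Step 1.} Let $p_1 < p_2 < \cdots$ denote the sequence of primes. If $n$ has $k=\omega(n)$ distinct prime divisors, then since $p_1,\ldots,p_k$ are the smallest possible choices of prime divisors, we have
$$
n \;\ge\; p_1 p_2 \cdots p_k.
$$
The elementary inequality $p_i \ge i$ for all $i\ge 1$ (checked by induction or direct inspection, noting $p_1=2,p_2=3,\ldots$) gives the crude but sufficient bound $n\ge k!$.

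\medskip

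\emph{Step 2.} Taking logarithms and applying Stirling's formula (in the weak form $\log(k!) \ge k\log k - k$ for $k\ge 1$), I obtain
$$
\log n \;\ge\; k\log k - k.
$$
Solving this inequality for $k$ in terms of $\log n$ is the routine step: for $n$ large, dividing by $\log k$ and using that $\log k \sim \log\log n$ when $k\sim \log n/\log\log n$ yields
$$
k \;=\; O\!\left(\frac{\log n}{\log\log n}\right),
$$
which is exactly the first claim. (A sharper version could be obtained via Chebyshev's estimate $\vartheta(p_k)\sim k\log k$, but the crude bound $n\ge k!$ already suffices.)

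\medskip

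\emph{Step 3.} For the second claim, fix $\epsilon>0$. Since $\log n/\log\log n = o(\log n)$ as $n\to\infty$, the first estimate gives $\omega(n) \le C\log n/\log\log n < \epsilon\log n$ for all sufficiently large $n$, and the finitely many remaining $n$ are absorbed into the exceptional set.

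\medskip

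I do not expect any real obstacle here: this is a standard elementary fact in analytic number theory, and no deep input (not even the prime number theorem) is required. The only point of care is the transition from $\log n \ge k\log k - k$ to an explicit bound on $k$, which is a routine manipulation that can be handled either by a direct monotonicity argument or by contradiction assuming $k \ge C\log n/\log\log n$ for a suitable constant $C$.
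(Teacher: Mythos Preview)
Your proof is correct. The paper itself does not prove this lemma at all: it is introduced with the phrase ``We recall the following elementary fact'' and stated without proof, as it is a classical estimate (often attributed to Hardy--Ramanujan in sharper form). Your argument via $n\ge p_1\cdots p_k\ge k!$ and Stirling is the standard elementary route, and nothing more is needed here.
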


 For a pair of positive integers $a,b$ we define
$$
\Tcal(a,b)=\{\psi\in \Tcal : \supp(\psi)\subseteq \supp (ab(a+b))\mbox{ and }\dd^\psi(a+b)=\dd^\psi a+\dd^\psi b\}
$$
(since $a$ and $b$ are positive, $\supp (ab(a+b))$ is a finite set.) Thus, for $\psi\in \Tcal(a,b)$ we have that the arithmetic derivative $\dd^\psi$ not only satisfies the Leibniz rule, but also, it satisfies $\dd^\psi(a+b)=\dd^\psi a+\dd^\psi b$ for the chosen integers $a$ and $b$. Explicitly, the condition $\dd^\psi(a+b)=\dd^\psi a+\dd^\psi b$ is
\begin{equation}\label{EqnAdd}
a\sum_{p|a}\frac{v_p(a)}{p}\cdot \psi(\xi_p)+b\sum_{p|b}\frac{v_p(b)}{p}\cdot  \psi(\xi_p) = (a+b)\sum_{p|a+b}\frac{v_p(a+b)}{p}\cdot  \psi(\xi_p)
\end{equation}
which is a homogeneous linear equation on the unknowns $ \psi(\xi_p)$ for $p\in \supp(ab(a+b))$. Hence:
\begin{lemma}[Basic existence lemma]\label{LemmaExistence} Let $a$ and $b$ be positive integers. Then $\Tcal(a,b)$ is a saturated $\Z$-submodule of $\Tcal$ of rank $\omega(ab(a+b))-1$. 
\end{lemma}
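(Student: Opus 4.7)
The plan is to identify $\Tcal(a,b)$ with the kernel of a single non-zero $\Z$-linear form on a finite-rank free $\Z$-module, and then read off the rank and the saturation. Set $S = \supp(ab(a+b))$ and $s = |S| = \omega(ab(a+b))$. The subgroup $\Tcal_S := \{\psi \in \Tcal : \supp(\psi) \subseteq S\}$ consists of derivations with finitely many non-zero coordinates, so the norm condition is automatic and $\Tcal_S$ identifies with $\Z^S$ via $\psi \mapsto (\psi(\xi_p))_{p \in S}$; moreover $\Tcal_S$ is saturated in $\Tcal$ since $\supp(n\psi) = \supp(\psi)$ for every non-zero integer $n$. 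Because $n v_p(n)/p \in \Z$ whenever $p \mid n$, the additivity condition \eqref{EqnAdd} is a single homogeneous $\Z$-linear equation in the coordinates $(\psi(\xi_p))_{p \in S}$, so $\Tcal(a,b)$ is identified with $\ker L$ for a $\Z$-linear form $L : \Z^S \to \Z$.

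The crux is showing $L \neq 0$, equivalently that $\eta := \dd(a+b) - \dd(a) - \dd(b) \in \Omega$ is non-zero. The $\xi_q$-coefficient of $\eta$ is $\tfrac{1}{q}\bigl((a+b)v_q(a+b) - a v_q(a) - b v_q(b)\bigr)$, and I would derive a contradiction from the assumption that it vanishes for every prime $q$ by a prime-by-prime case analysis. If $q$ divides exactly one of $a, b$ (say $q \mid a$, $q \nmid b$), then $q \nmid a+b$ and the coefficient reduces to $-a v_q(a)/q \neq 0$; the symmetric case and the case $q \mid a+b$ with $q \nmid ab$ are analogous. So every relevant prime $q$ divides all three of $a, b, a+b$. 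For such a $q$, set $\alpha = v_q(a)$, $\beta = v_q(b)$, $\gamma = v_q(a+b)$: if $\alpha < \beta$, then $\gamma = \alpha$ and the vanishing relation collapses to $\beta = \alpha$, a contradiction (symmetrically if $\alpha > \beta$). Hence $v_q(a) = v_q(b)$ for every prime $q$, forcing $a = b$. Then $a+b = 2a$, and a direct inspection of the $\xi_2$-coefficient of $\eta$ yields the value $a \neq 0$ regardless of the parity of $a$, the final contradiction. This case analysis is the main obstacle; everything else is formal linear algebra.

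Once $L \neq 0$ is established, $L \otimes \Q : \Q^S \to \Q$ is surjective, so $\ker L$ has $\Q$-rank $s - 1$; as a subgroup of the free abelian group $\Tcal_S \cong \Z^S$, it is itself free of rank $\omega(ab(a+b)) - 1$. Saturation of $\Tcal(a,b)$ in $\Tcal$ then follows by composing two saturation statements: $\Tcal_S$ is saturated in $\Tcal$ as noted above, and $\ker L$ is saturated in $\Tcal_S$ because the quotient $\Tcal_S/\ker L$ embeds into $\Z$ via $L$ and is therefore torsion-free.
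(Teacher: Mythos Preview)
Your proof is correct and follows the same approach as the paper, namely identifying $\Tcal(a,b)$ with the kernel of the single homogeneous linear form \eqref{EqnAdd} on $\Z^S$. In fact you are more thorough than the paper: the paper simply states the lemma as an immediate consequence of \eqref{EqnAdd} being linear, whereas you explicitly verify the only non-obvious point---that the form $L$ is non-zero for arbitrary (not necessarily coprime) positive integers $a,b$---and you also spell out the saturation argument, both of which the paper leaves implicit.
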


\subsection{Bounding the norm} We aim for a more precise version of Lemma \ref{LemmaExistence}. First, we note that for all $m,n,k\in \Z$ we have
$$
\dd (km+kn)-\dd(km) - \dd(kn)=k\cdot \left(\dd(m+n)-\dd m -\dd n\right)
$$
and similarly for $\dd^\psi$ for any $\psi\in \Tcal$. Hence, the question of existence of arithmetic derivatives respecting additivity for a chosen pair of numbers can be reduced to the coprime case.

We will need the following version of Siegel's lemma which builds on Minkowski's second theorem in Geometry of Numbers, see Theorem 2 in \cite{BombieriVaaler}.

\begin{theorem}[Siegel's lemma]\label{ThmSL} Let $a_1,...,a_N\in \Z$. The equation $a_1X_1+...+a_NX_N=0$ has linearly independent solutions ${\bf x}_i = (x_{i1},...,x_{iN})\in \Z^N$ for $1\le i\le N-1$ satisfying  
$$
\prod_{i=1}^{N-1} \max_{1\le j\le N} |x_{ij}|\le N\cdot \max_{1\le j\le N}|a_j|.
$$
\end{theorem}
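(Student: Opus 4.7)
The plan is to deduce this statement as a specialization of the Bombieri--Vaaler refinement of Siegel's lemma (Theorem 2 in \cite{BombieriVaaler}) to the case of a single linear equation. All ingredients are classical Geometry of Numbers.

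First I would handle the degenerate case separately: if $a_1=\cdots=a_N=0$, then the standard basis of $\Z^N$ provides $N\ge N-1$ linearly independent solutions each of sup-norm $1$, satisfying the bound trivially. Otherwise set $A:=\max_j|a_j|\ge 1$ and $d:=\gcd(a_1,\dots,a_N)$, and consider
$$
\Lambda=\bigl\{\mathbf{x}\in\Z^N : a_1 x_1+\cdots+a_N x_N=0\bigr\},
$$
a lattice of rank $N-1$ sitting inside the hyperplane $H=\{\mathbf{y}\in\R^N:\mathbf{a}\cdot\mathbf{y}=0\}$. The standard kernel--covolume formula identifies the $(N-1)$-dimensional covolume of $\Lambda$ inside $H$ with $\|\mathbf{a}\|_2/d$, which is at most $\sqrt{N}\,A$.

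Next I would apply Minkowski's second theorem to $\Lambda$ relative to the symmetric convex body $C=[-1,1]^N\cap H\subset H$, obtaining successive minima $0<\lambda_1\le\cdots\le\lambda_{N-1}$ with
$$
\lambda_1\cdots\lambda_{N-1}\cdot\mathrm{vol}_{N-1}(C)\le 2^{N-1}\cdot\mathrm{covol}_H(\Lambda).
$$
The key quantitative input is Vaaler's cube-slicing inequality, which gives $\mathrm{vol}_{N-1}(C)\ge 2^{N-1}$ for every central hyperplane section of $[-1,1]^N$. Combining this with the covolume bound yields $\lambda_1\cdots\lambda_{N-1}\le\sqrt{N}\,A\le N\cdot A$. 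By the definition of successive minima I can then pick linearly independent $\mathbf{x}_i\in\Lambda$ with $\|\mathbf{x}_i\|_\infty\le\lambda_i$, which is exactly the conclusion.

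The only nontrivial ingredient beyond classical Minkowski theory is Vaaler's inequality; everything else is routine bookkeeping with the gcd and the Euclidean norm of the coefficient vector. In the final write-up I would simply invoke Theorem 2 of \cite{BombieriVaaler}, which packages precisely this argument in the form needed: applied to the $1\times N$ matrix $\mathbf{a}=(a_1,\ldots,a_N)$ of rank $1$ one has $D=\gcd(a_j)$ and $\det(\mathbf{a}\mathbf{a}^T)=\sum a_j^2$, and the stated bound drops out after the estimate $\sqrt{\sum a_j^2}/\gcd(a_j)\le \sqrt{N}\,A\le N\,A$.
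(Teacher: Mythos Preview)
Your approach matches the paper's exactly: the paper does not prove this statement but simply cites Theorem~2 of \cite{BombieriVaaler}, and you do the same while helpfully unpacking the underlying Minkowski--Vaaler argument in the rank-one case. One small slip: in the degenerate case $a_1=\cdots=a_N=0$ the right-hand side is $N\cdot 0=0$ while the left-hand side is at least $1$, so the bound is \emph{not} satisfied trivially---the statement as written tacitly assumes the coefficient vector is nonzero (which is the only case used in the paper).
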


With this at hand, we can prove a more precise version of Lemma \ref{LemmaExistence}, which we state in the case of positive integers for the sake of simplicity.

\begin{theorem}[Existence of arithmetic derivatives of controlled size] \label{ThmExistence} 
Suppose that $a,b$ are coprime positive integers with $c:=a+b>2$, i.e., $(a,b)\ne (1,1)$. Then  $\Tcal(a,b)$ has rank $r:=\omega(abc)-1\ge 1$ and  there are $\Z$-linearly independent derivations $\psi_1,...,\psi_r\in \Tcal(a,b)$ satisfying
$$
\prod_{i=1}^r \|\psi_i\|  \le \frac{\omega(abc)}{2\log 2}\cdot c\log c. 
$$
\end{theorem}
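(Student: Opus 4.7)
The plan is to view Equation (\ref{EqnAdd}) as a single homogeneous linear equation on the $N:=\omega(abc)$ unknowns $\psi(\xi_p)$ with $p\mid abc$, and apply Theorem \ref{ThmSL} to extract $r=N-1$ linearly independent integer solutions of controlled size. First I would exploit that $\gcd(a,b)=1$ together with $c=a+b$ forces $a,b,c$ to be pairwise coprime, so each prime $p\mid abc$ divides exactly one of $a,b,c$. Thus (\ref{EqnAdd}) takes the shape $\sum_{p\mid abc}A_p\,\psi(\xi_p)=0$ with
$$
A_p=\begin{cases} a\,v_p(a)/p, & p\mid a,\\ b\,v_p(b)/p, & p\mid b,\\ -c\,v_p(c)/p, & p\mid c,\end{cases}
$$
each $A_p$ being an integer. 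The hypothesis $(a,b)\ne(1,1)$ rules out $\omega(abc)=1$: that would force $a,b,c$ all to be powers of one prime, and pairwise coprimality would then force two of them to equal $1$. Hence $N\ge 2$, so $r\ge 1$; the rank count itself is already given by Lemma \ref{LemmaExistence}.

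Next I would bound $\max_p|A_p|$. The elementary inequality $v_p(n)\log p\le\log n$ combined with $p\log p\ge 2\log 2$ gives $v_p(n)/p\le(\log n)/(2\log 2)$ for each prime $p\mid n$; this is a ``pointwise'' version of the estimate proved in Lemma \ref{Lemmavbd}. Multiplying by $n$ and using $a,b\le c$ together with the monotonicity of $x\log x$ on $[1,\infty)$ (note that $A_p=0$ whenever the underlying $n$ equals $1$, so monotonicity is safely applied) yields $|A_p|\le c\log(c)/(2\log 2)$ for every $p\mid abc$.

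With the coefficient bound in hand, I apply Theorem \ref{ThmSL} to the equation $\sum_p A_p X_p=0$ in $N$ unknowns. This produces linearly independent integer vectors $\mathbf{x}_1,\dots,\mathbf{x}_r\in\Z^N$ satisfying
$$
\prod_{i=1}^r\max_j|x_{ij}|\ \le\ N\cdot\max_p|A_p|\ \le\ \frac{\omega(abc)}{2\log 2}\cdot c\log c.
$$
Each $\mathbf{x}_i$ determines a derivation $\psi_i\in\Tcal$ by declaring $\psi_i(\xi_p)=x_{ip}$ for $p\mid abc$ and $\psi_i(\xi_q)=0$ otherwise. By construction $\supp(\psi_i)\subseteq\supp(abc)$ and $\psi_i$ solves (\ref{EqnAdd}), so $\psi_i\in\Tcal(a,b)$; the $\Z$-linear independence is preserved, and since $\psi_i$ is supported on the $N$ chosen primes, $\|\psi_i\|=\max_j|x_{ij}|$. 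The displayed product inequality is then exactly the one claimed.

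I do not anticipate any substantive obstacle. The only point that warrants attention is converting the $x\log x$ bound into the constant $(2\log 2)^{-1}$ appearing in the statement: one must resist the temptation to bound the \emph{sum} $\sum_{p\mid n} v_p(n)/p$ (as in Lemma \ref{Lemmavbd}) and instead bound each term $v_p(n)/p$ separately, since it is the individual coefficients $A_p$, not their sum, that enter Siegel's lemma.
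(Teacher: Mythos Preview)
Your argument is correct and follows essentially the same route as the paper: rewrite \eqref{EqnAdd} as one homogeneous linear equation in the $\omega(abc)$ unknowns $\psi(\xi_p)$, bound each coefficient by $c\log c/(2\log 2)$ via $v_p(n)/p\le(\log n)/(2\log 2)$ and $a,b\le c$, and apply Theorem~\ref{ThmSL}. Your write-up is in fact more detailed than the paper's (which compresses the coefficient bound into one sentence), and your closing remark about bounding individual coefficients rather than their sum is a useful clarification.
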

\begin{proof} As in \eqref{EqnAdd}, the condition  $\dd^{\psi} a+ \dd^{\psi} b = \dd^{\psi} c$ defining $\Tcal(a,b)$ becomes
$$
a\sum_{p|a}\frac{v_p(a)}{p}\cdot \psi(\xi_p)+b\sum_{p|b}\frac{v_p(b)}{p}\cdot  \psi(\xi_p) = c\sum_{p|c}\frac{v_p(c)}{p}\cdot  \psi(\xi_p)
$$
Since $(a,b)\ne (1,1)$ we have $r\ge1$. Treating $\psi(\xi_p)$ as unknowns and using the fact that $a$, $b$, and  $c$ are pairwise coprime, the coefficients of the previous equation are positive integers bounded by $c\log_2(c)/2$ where $\log_2$ is the base $2$ logarithm. The result follows by Theorem \ref{ThmSL}.
\end{proof}
Choosing the smallest derivation provided by the previous theorem one deduces:
\begin{corollary}[Existence of a small derivative]\label{CoroExistence} Let $\epsilon>0$. For all but finitely many triples of coprime integers $a,b,c$ with $c>2$ and satisfying $a+b=c$, there is a non-zero $\psi\in \Tcal(a,b)$ with $\|\psi\|< c^{\frac{1}{r} +\epsilon}$, where $r=\omega(abc)-1$.
\end{corollary}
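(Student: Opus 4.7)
The plan is to pass directly from Theorem \ref{ThmExistence} to Corollary \ref{CoroExistence} by simply picking the derivation of smallest norm among the $r$ linearly independent derivations produced there. Since the $\psi_i$ take integer values on each $\xi_p$ and are nonzero (linear independence), we have $\|\psi_i\| \ge 1$ for each $i$, so the elementary inequality $\min_i \|\psi_i\| \le \bigl(\prod_i \|\psi_i\|\bigr)^{1/r}$ combined with the product bound from Theorem \ref{ThmExistence} yields a non-zero $\psi \in \Tcal(a,b)$ with
$$
\|\psi\| \;\le\; \left(\frac{\omega(abc)}{2\log 2}\cdot c\log c\right)^{1/r} \;=\; c^{1/r}\cdot \left(\frac{\omega(abc)\log c}{2\log 2}\right)^{1/r}.
$$

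It then suffices to show that $\bigl(\omega(abc)\log c/(2\log 2)\bigr)^{1/r} < c^\epsilon$ for all but finitely many admissible triples $(a,b,c)$. Since $r\ge 1$, it is enough to verify the stronger statement $\omega(abc)\log c < (2\log 2)\,c^\epsilon$ for all sufficiently large $c$. Because $a,b < c$ we have $abc < c^3$, so Lemma \ref{Lemmaw} gives $\omega(abc) = O(\log c/\log\log c)$, whence $\omega(abc)\log c = O(\log^2 c/\log\log c) = o(c^\epsilon)$ for any fixed $\epsilon>0$. This excludes only finitely many values of $c$, and for each such $c$ there are only finitely many coprime splittings $a+b=c$, so the exceptional set of triples is finite.

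I do not expect a serious obstacle here: the statement is essentially a repackaging of Theorem \ref{ThmExistence}, trading the sharp geometric-mean bound on $\prod_i \|\psi_i\|$ for the cruder pointwise estimate on $\min_i \|\psi_i\|$. The only conceptual ingredients are (i) the fact that linear independence prevents $\psi$ from being zero, and (ii) the subexponential growth of $\omega$ from Lemma \ref{Lemmaw}, which absorbs the factor $\omega(abc)\log c$ into $c^\epsilon$.
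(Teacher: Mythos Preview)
Your proof is correct and is exactly the argument the paper has in mind: the text simply says ``Choosing the smallest derivation provided by the previous theorem one deduces'' the corollary, and you have spelled out precisely this---taking the minimum norm among the $\psi_i$, bounding it by the geometric mean via Theorem \ref{ThmExistence}, and absorbing the factor $\omega(abc)\log c$ into $c^\epsilon$ using Lemma \ref{Lemmaw}. (A minor stylistic remark: the inequality $\min_i \|\psi_i\| \le \bigl(\prod_i \|\psi_i\|\bigr)^{1/r}$ holds for any positive reals and does not require $\|\psi_i\|\ge 1$; that lower bound is only needed to ensure the chosen $\psi$ is nonzero.)
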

However, Corollary \ref{CoroExistence} does not ensure any sort of non-degeneracy for the arithmetic derivative it provides. For instance, although $\psi$ is not zero, it can occur that $\dd^\psi(a)=\dd^\psi(b)=\dd^\psi(c)=0$. The following result  remedies this situation.

\begin{theorem}[Small non-trivial derivatives]\label{ThmNonZero} Let $\epsilon>0$. For all but finitely many triples of coprime integers $a,b,c$ larger than $1$ that satisfy $a+b=c$, there is $\psi\in \Tcal(a,b)$ with $\|\psi\|< c^{\frac{1}{2} +\epsilon}$ such that not all the integers $\dd^\psi(a)$, $\dd^\psi(b)$, $\dd^\psi(c)$ are zero.
\end{theorem}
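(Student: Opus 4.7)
The plan is to apply Theorem \ref{ThmExistence} to produce a complete system of small linearly independent derivations $\psi_1,\ldots,\psi_r\in\Tcal(a,b)$ and then extract from this system a \emph{non-degenerate} one (meaning at least one of $\dd^\psi(a),\dd^\psi(b),\dd^\psi(c)$ is nonzero) by a codimension argument. The final size bound will then follow from a ``geometric mean'' estimate applied to the product bound of Theorem \ref{ThmExistence}.

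First, I would note that since $a,b,c>1$ are pairwise coprime, we have $\omega(abc)\ge 3$, so $r=\omega(abc)-1\ge 2$ and Theorem \ref{ThmExistence} furnishes $r$ linearly independent $\psi_1,\ldots,\psi_r\in\Tcal(a,b)$. I would reorder them so that $\|\psi_1\|\le\cdots\le\|\psi_r\|$ and observe that each $\|\psi_i\|\ge 1$ since $\psi_i\ne 0$.

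The next step, and the technical core, is to identify the sublattice
\[
W:=\{\psi\in\Tcal(a,b):\dd^\psi(a)=\dd^\psi(b)=\dd^\psi(c)=0\}
\]
and compute its rank. On the ambient rank-$\omega(abc)$ lattice $\{\psi\in\Tcal:\supp(\psi)\subseteq\supp(abc)\}$, the three linear forms $\psi\mapsto\dd^\psi(a),\dd^\psi(b),\dd^\psi(c)$ involve only the variables $\psi(\xi_p)$ with $p$ dividing $a$, $b$, $c$ respectively. By pairwise coprimality these three sets of variables are disjoint, and since $a,b,c>1$ each form is nontrivial; hence the three forms are $\Z$-linearly independent. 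Their joint vanishing thus has rank $\omega(abc)-3$ and automatically lies in $\Tcal(a,b)$ (since $0+0=0$ respects the defining additivity condition), so $W$ has rank exactly $r-2$. This codimension computation is the step I expect to be the main obstacle, since it relies crucially on both pairwise coprimality and each of $a,b,c$ being $>1$; without these, the three forms could become dependent and $W$ could have higher-than-expected rank, breaking the argument.

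The last step is the codimension/size argument. If all of $\psi_1,\ldots,\psi_{r-1}$ were in $W$, we would have $r-1$ linearly independent elements in a lattice of rank $r-2$, a contradiction. So some $\psi_j$ with $j\le r-1$ satisfies $\psi_j\notin W$, i.e.\ it is non-degenerate in the sense required. Using $\|\psi_j\|\le\|\psi_{r-1}\|$ together with $\|\psi_i\|\ge 1$ for each $i$, I obtain
\[
\|\psi_j\|^2\;\le\;\|\psi_{r-1}\|\cdot\|\psi_r\|\;\le\;\prod_{i=1}^r\|\psi_i\|\;\le\;\frac{\omega(abc)}{2\log 2}\cdot c\log c,
\]
where the last inequality is the bound from Theorem \ref{ThmExistence}. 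Applying Lemma \ref{Lemmaw} to conclude $\omega(abc)=O(\log c/\log\log c)$ then yields $\|\psi_j\|=O(c^{1/2}\log c)$, which is smaller than $c^{1/2+\epsilon}$ for all but finitely many triples $(a,b,c)$, as desired.
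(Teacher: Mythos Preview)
Your proof is correct and follows essentially the same approach as the paper: the paper defines the same rank-$(r-2)$ sublattice (denoted $\Kcal(a,b)$, cut out inside $\Tcal(a,b)$ by the two conditions $\dd^\psi(a)=\dd^\psi(b)=0$, which coincides with your $W$ since the third vanishing is redundant there), observes that at least two of the ordered $\psi_i$ must lie outside it, and bounds the smaller one by the square root of the product from Theorem~\ref{ThmExistence}. The only cosmetic difference is that the paper picks two indices $i_1<i_2$ with $\psi_{i_1},\psi_{i_2}\notin\Kcal(a,b)$ and writes $\|\psi_{i_1}\|^2\le\|\psi_{i_1}\|\|\psi_{i_2}\|\le\prod_i\|\psi_i\|$, whereas you pick one $j\le r-1$ and bound via $\|\psi_{r-1}\|\|\psi_r\|$; these are equivalent.
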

\begin{proof} Since $a,b,c$ are larger than $1$, each one of them has prime divisors. Thus, the conditions \eqref{EqnAdd}, $\dd^\psi(a)=0$, and $\dd^\psi(b)=0$ are linearly independent when we consider the terms $\psi(\xi_p)$ as unknowns. Let $\Kcal(a,b)\subseteq \Tcal(a,b)$ be the subgroup defined by these conditions and note that $\rk \Kcal(a,b)=r-2$ where $r=\rk \Tcal(a,b)=\omega(abc)-1$, see Lemma \ref{LemmaExistence}.

Let $\psi_1,...,\psi_r\in \Tcal(a,b)$ be as provided by Theorem \ref{ThmExistence} and assume that they are labeled in such a way that $\|\psi_1\|\le \|\psi_2\|\le ...\le \|\psi_r\|$. Since the $\psi_i$ are linearly independent, there are indices $i_1<i_2$ such that $\psi_{i_1}$ and $\psi_{i_2}$ are not in $\Kcal(a,b)$. Then we have
$$
\|\psi_{i_1}\|^2\le \|\psi_{i_1}\|\cdot \|\psi_{i_2}\|\le \prod_{i=1}^r \|\psi_i\|\le \frac{\omega(abc)}{2\log 2}\cdot c\log c.
$$
and we conclude by Lemma \ref{Lemmaw}.
\end{proof}

We will be interested in a more delicate notion of non-degeneracy for a derivation $\psi\in \Tcal(a,b)$, for which we need to introduce certain arithmetic Wronskians.

\subsection{Independence}

One might be tempted to explore analogues of various notions from differential calculus using the functions $\dd^\psi:\Z\to\Z$ instead of an actual derivative. Rather than giving a lengthy list of such definitions, let us simply mention here a notion that will be useful for us. Given $\psi\in \Tcal$, the \emph{$\psi$-Wronskian} of two integers $a,b$ is defined by
$$
W^\psi(a,b)=\det\left[
\begin{array}{cc}
a&b\\
\dd^\psi a& \dd^\psi b
\end{array}
\right]=a\dd^\psi b-b\dd^\psi a \in\Z.
$$
Let us also note the formula
\begin{equation}\label{EqnW}
W^\psi(a,b)=ab\cdot \left(\sum_{p|b}\frac{v_p(b)}{p}\psi(\xi_p) - \sum_{p|a}\frac{v_p(a)}{p}\psi(\xi_p)\right).
\end{equation}
We say that $a,b$ are \emph{$\psi$-dependent} if $W^\psi(a,b)= 0$. Otherwise, they are \emph{$\psi$-independent}. From \eqref{EqnW} we deduce  that $a,b$ are $\psi$-dependent if and only if
\begin{equation}\label{EqnDep}
\sum_{p|a}\frac{v_p(a)}{p}\psi(\xi_p) =  \sum_{p|b}\frac{v_p(b)}{p}\psi(\xi_p).
\end{equation}
Given positive integers $a$ and $b$ we define 
$$
\Tcal^\circ(a,b)=\{\psi\in \Tcal(a,b) : a,b \mbox{ are $\psi$-dependent}\}.
$$
\begin{lemma}\label{LemmaDep} Let $a,b$ be coprime positive integers with $(a,b)\ne (1,1)$. The set $\Tcal^\circ(a,b)$ is a saturated $\Z$-submodule of $\Tcal(a,b)$ with  $\rk \Tcal^{\circ}(a,b)= \rk \Tcal(a,b)-1=\omega(ab(a+b))-2$.  In particular, $\Tcal^\circ(a,b)$ is   properly contained in $\Tcal(a,b)$.
\end{lemma}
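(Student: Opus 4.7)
The plan is to identify $\Tcal^\circ(a,b)$ as the kernel of a single $\Z$-linear functional on $\Tcal(a,b)$, from which saturation will be automatic and the rank computation will reduce to a nonvanishing check. Define $W:\Tcal(a,b)\to \Z$ by $W(\psi) = W^\psi(a,b) = a\,\dd^\psi b - b\,\dd^\psi a$. Since $\dd^\psi$ depends $\Z$-linearly on $\psi$, so does $W$, and it takes integer values because $\dd^\psi a,\dd^\psi b\in \Z$. By definition, $\Tcal^\circ(a,b) = \ker W$. Because the target $\Z$ is torsion-free, $\ker W$ is saturated in $\Tcal(a,b)$; combined with the fact (from Lemma \ref{LemmaExistence}) that $\Tcal(a,b)$ is saturated in $\Tcal$, this gives saturation of $\Tcal^\circ(a,b)$ in $\Tcal$.

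Next I reduce the rank claim to the nonvanishing of $W$. If $W$ is nonzero on $\Tcal(a,b)$, then its image is a nontrivial subgroup of $\Z$, hence free of rank $1$, so $\rk \ker W = \rk \Tcal(a,b) - 1 = \omega(ab(a+b)) - 2$ by Lemma \ref{LemmaExistence}. Using formula \eqref{EqnW}, $W^\psi(a,b) = ab\cdot (B(\psi) - A(\psi))$ where $A(\psi) = \sum_{p|a}(v_p(a)/p)\psi(\xi_p)$ and $B(\psi) = \sum_{p|b}(v_p(b)/p)\psi(\xi_p)$. Hence $W \equiv 0$ on $\Tcal(a,b)$ if and only if the linear form $A - B$ is a scalar multiple of the linear form defining \eqref{EqnAdd}, viewed in the ambient $\Q$-vector space $\bigoplus_{p\in \supp(abc)} \Q\cdot \psi(\xi_p)$.

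The main step, and the only part requiring real work, is to rule this proportionality out. Because $a,b,c=a+b$ are pairwise coprime, the supports $\supp(a), \supp(b), \supp(c)$ are pairwise disjoint. The coefficient of $\psi(\xi_p)$ in \eqref{EqnAdd} for $p\in \supp(c)$ is $-c\cdot v_p(c)/p \ne 0$, while in $A - B$ it is $0$; since $c\ge 2$, the set $\supp(c)$ is nonempty, so any proportionality would force $A-B \equiv 0$ as a linear form. On the other hand, the hypothesis $(a,b)\ne (1,1)$ guarantees that $\supp(a)\cup \supp(b)$ contains at least one prime $p$, at which the coefficient of $A-B$ is $\pm v_p(\cdot)/p \ne 0$. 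Thus $A - B \not\equiv 0$, the two forms are linearly independent, $W$ is nontrivial on $\Tcal(a,b)$, and the rank drops by exactly one. Proper containment follows at once from the strict rank drop together with torsion-freeness of $\Tcal(a,b)$.
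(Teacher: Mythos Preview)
Your proof is correct and follows essentially the same approach as the paper's: both arguments reduce to showing that the linear forms \eqref{EqnAdd} and \eqref{EqnDep} are linearly independent, by noting that primes dividing $c$ appear with nonzero coefficient in \eqref{EqnAdd} but not in \eqref{EqnDep}, while \eqref{EqnDep} is itself nontrivial because $(a,b)\ne(1,1)$ guarantees a prime dividing $ab$. Your framing via the kernel of the Wronskian functional $W:\Tcal(a,b)\to\Z$ is a mild repackaging that has the virtue of making the saturation claim explicit, which the paper's proof leaves implicit.
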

\begin{proof} Since $(a,b)\ne (1,1)$ there is some prime $q|ab$. Hence, the Equation \eqref{EqnDep} defining $\Tcal^\circ(a,b)$ is non-trivial. Furthermore, no term corresponding to primes $p|c$ contributes to \eqref{EqnDep}, while they appear in the Equation \eqref{EqnAdd} defining $\Tcal(a,b)$. This proves that, considering the values $\psi(\xi_p)$ as variables, the Equations \eqref{EqnAdd} and \eqref{EqnDep} are linearly independent. We conclude by Lemma \ref{LemmaExistence}.
\end{proof}

\section{An $abc$ bound and the problem of small arithmetic derivatives}

\subsection{The $abc$ Conjecture} The radical of a positive integer $n$, denoted by $\rad(n)$, is the product without repetitions of the different primes dividing $n$. The celebrated $abc$ Conjecture is
\begin{conjecture}[The Masser-Oesterl\'e $abc$ Conjecture]\label{Conjabc} Given $\epsilon>0$, there is a constant $\kappa_\epsilon>0$ such that for all coprime positive integers $a,b,c$ with $a+b=c$ we have $c<\kappa_\epsilon\cdot  \rad(abc)^{1+\epsilon}$.
\end{conjecture}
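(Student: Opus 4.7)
The final statement is the famous Masser-Oesterlé $abc$ Conjecture, which is of course open, so the paper surely does not give an outright proof here; the conjecture is stated as the target of the equivalence results advertised in the abstract (Theorem \ref{ThmEquiv}). Nevertheless, within the paper's framework the natural route toward attacking it is dictated by the architecture already visible in the introduction, so let me describe the strategy the author has set up.

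The plan is to reduce the conjecture to the Small Derivatives Conjecture and then exploit Geometry of Numbers to produce the derivations it predicts. Concretely, for coprime $a,b,c$ with $a+b=c$, one would first appeal to an unconditional $abc$-type inequality of the shape proved in Theorem \ref{Thmabc}, which bounds $\log c$ in terms of $\log \rad(abc)$ plus a penalty term controlled by $\log \|\psi\|$ for any $\psi \in \Tcal(a,b)$ with $a,b$ being $\psi$-independent. Granted such an inequality, the conjecture reduces to exhibiting, outside the trivial family $(1,N,q)$, a $\psi \in \Tcal(a,b)$ which is simultaneously $\psi$-independent (so that the Wronskian $W^\psi(a,b)$ does not vanish, allowing the bound to be nontrivial) and whose norm satisfies $\|\psi\| < c^\eta$ for some fixed $\eta<1$: exactly the content of the Small Derivatives Conjecture.

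For the production of such $\psi$, the natural starting point is Theorem \ref{ThmExistence}, whose Siegel/Minkowski argument yields a basis $\psi_1,\dots,\psi_r$ of $\Tcal(a,b)$ with $\prod\|\psi_i\| \lesssim c\log c\cdot \omega(abc)$, and hence (Corollary \ref{CoroExistence}) at least one $\psi$ of norm at most $c^{1/r+\epsilon}$ where $r=\omega(abc)-1$. When $r$ is large this is already small, but one must still arrange $\psi$-independence, that is, $\psi \notin \Tcal^\circ(a,b)$; by Lemma \ref{LemmaDep} the codimension-one submodule $\Tcal^\circ(a,b)$ can be avoided by a small perturbation argument analogous to the one used in the proof of Theorem \ref{ThmNonZero}, where at most the two smallest basis elements have to be discarded, so one still obtains norm at most roughly $c^{2/r+\epsilon}$ after choosing a good linear combination.

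The essential obstacle is precisely that $r=\omega(abc)-1$ can be very small — indeed, it is small exactly when $\rad(abc)$ is small, which is the interesting range of the $abc$ Conjecture. The upper bounds above then degrade to $c^{1+o(1)}$ and become useless. Bridging this gap — obtaining a power-saving improvement over what Minkowski's second theorem yields directly, either by exploiting arithmetic structure of the lattice cut out by Equation \eqref{EqnAdd} or by clever averaging in the spirit of the heuristic of Section \ref{SecHeuristic} — is exactly where genuinely new input is needed, and, as the paper will show, is equivalent to the $abc$ Conjecture itself. Any serious attempt at a proof must therefore confront this final step, which is not amenable to the soft Geometry-of-Numbers tools that sufficed for Theorems \ref{ThmExistence} and \ref{ThmNonZero}.
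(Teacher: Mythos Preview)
You are correct that Conjecture \ref{Conjabc} is stated in the paper as an open conjecture and is not proved there; the paper's contribution is the equivalence with the Small Derivatives Conjecture (Lemma \ref{LemmaEquiv} and Theorem \ref{ThmEquiv}), not a proof of either. Your summary of the surrounding architecture---Theorem \ref{Thmabc} reducing the problem to finding a small $\psi$-independent derivation, Theorem \ref{ThmExistence} and Corollary \ref{CoroExistence} supplying candidates via Siegel's lemma, and the codimension-one obstruction $\Tcal^\circ(a,b)$ from Lemma \ref{LemmaDep}---matches the paper accurately, as does your identification of the essential gap: when $r=\omega(abc)-1$ is small the Minkowski bound degrades to $c^{1+o(1)}$ and no power saving is available by soft methods.
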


For many applications even the following weaker version would suffice:

\begin{conjecture}[Oesterl\'e's $abc$ Conjecture]\label{ConjWeakabc} There is an absolute constant $M$ such that for all coprime positive integers $a,b,c$ with $c=a+b$ we have $c<  \rad(abc)^{M}$.
\end{conjecture}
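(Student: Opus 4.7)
Since the final statement is the celebrated Oesterl\'e $abc$ Conjecture, an unconditional proof is not to be expected; my plan is to \emph{reduce} it to the Small Derivatives Conjecture stated earlier, leveraging the (as yet unstated) unconditional $abc$-type bound announced as Theorem \ref{Thmabc}. The strategy has three steps.

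First, I would combine the two. For a coprime triple $(a,b,c)$ with $c = a+b$ not of the exceptional form $(1,N,q)$ with $q$ prime (up to order), the Small Derivatives Conjecture supplies a derivation $\psi \in \Tcal(a,b)$ with $a,b$ being $\psi$-independent and $\|\psi\| < c^{\eta}$ for some absolute $\eta < 1$. I expect Theorem \ref{Thmabc} to have the shape
$$
c \;\le\; C \cdot \rad(abc)^{\alpha} \cdot \|\psi\|^{\beta}
$$
for absolute constants $C,\alpha,\beta > 0$, mirroring the function-field Mason--Stothers proof in which a non-vanishing Wronskian controls the $abc$-quantity. Substituting $\|\psi\| < c^{\eta}$ and rearranging yields $c^{1-\beta\eta} \le C\cdot\rad(abc)^{\alpha}$; provided $\beta\eta < 1$, this gives $c \le C'\cdot\rad(abc)^{M}$ with $M = \alpha/(1-\beta\eta)$, which is exactly the shape demanded by the conjecture. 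The finitely many exceptional triples permitted by the Small Derivatives Conjecture are absorbed by enlarging $M$ or the implicit constant.

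Second, I would handle the excluded triples $(a,b,c)$ containing both $1$ and a prime $q$. A direct case analysis shows that in every such triple one has $\rad(abc) \ge c$: the prime $q$ always divides $\rad(abc)$ and is at least $c/2$, so the bound $c \le \rad(abc)^{M}$ holds trivially for any $M \ge 1$.

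The main obstacle is that both ingredients are themselves non-trivial: the Small Derivatives Conjecture is open and, as announced in the paper, is equivalent to the full $abc$ Conjecture, so the strategy is a reduction rather than a genuine proof. On the technical side, the real work is in engineering Theorem \ref{Thmabc} so that its dependence on $\|\psi\|$ is polynomial with exponent $\beta$ small enough to afford $\beta\eta < 1$. Achieving this delicate balance, between the Wronskian identities forced by the Leibniz rule (Lemma \ref{LemmaLeibnizAD}) and the geometry-of-numbers control on the size of a derivation (Theorem \ref{ThmExistence}), is precisely where the arithmetic content of any such reduction must reside.
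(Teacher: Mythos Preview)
Your proposal is correct and mirrors the paper's own argument, which appears as Lemma~\ref{LemmaEquiv}: the actual Theorem~\ref{Thmabc} reads $c/\log c \le \rad(abc)\cdot \|\psi\|/\log 2$, i.e.\ your anticipated shape with $\alpha=\beta=1$ up to a logarithm, so the condition $\beta\eta<1$ reduces to $\eta<1$ and one obtains Oesterl\'e's conjecture for every $M>1/(1-\eta)$. Your treatment of the excluded triples $(1,N,q)$ is likewise the same as the paper's (the paper observes $\rad(abc)\ge 2q>q+1\ge c$, which is a slight sharpening of your $q\ge c/2$).
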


Oesterl\'e's version of the $abc$ Conjecture was proposed first in 1985, and it was later refined into the Masser-Oesterl\'e $abc$ Conjecture by Masser. See \cite{Masserabc} for a historical  account of how these conjectures were formulated. To the best of the author's knowledge, they remain open.

\subsection{An $abc$ bound using arithmetic derivatives} The notion of derivation considered in the previous section is enough to get an estimate in the spirit of the $abc$ Conjecture, with a proof  analogous to Snyder's proof of Mason's Theorem in the function field setting (see \cite{Snyder}) or to the proof of the Second Main Theorem in Nevanlinna theory using Wronskians or logarithmic derivatives. 
\begin{theorem}[An $abc$ estimate]\label{Thmabc} Let $a,b$ be coprime positive integers with $(a,b)\ne (1,1)$ and let $\psi\in \Tcal(a,b)$. Suppose that $a$ and $b$  are $\psi$-independent. Writing $c=a+b$, we have
$$
\frac{c}{\log c}\le  \rad(abc)\cdot \frac{\|\psi\| }{\log 2}.
$$
\end{theorem}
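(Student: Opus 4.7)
The plan is to imitate Snyder's proof of Mason's theorem by deriving two estimates on $|W^\psi(a,b)|$: a lower bound forced by divisibility (the "unit part" of $W^\psi$ must absorb the ramification of $a$, $b$, and $c$), and an upper bound coming from the analytic size of $\psi$. Combining them, with the hypothesis that $W^\psi(a,b)\ne 0$ (the $\psi$-independence), will give the claimed inequality.

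For the divisibility bound, I would start from the basic observation that, for any integer $n$ and any prime $p$ with $v_p(n)\ge 1$, the formula
$$\dd^\psi n = n\sum_{q\mid n}\frac{v_q(n)}{q}\psi(\xi_q)$$
shows $v_p(\dd^\psi n)\ge v_p(n)-1$, because the term corresponding to $q=p$ contributes exactly $v_p(n)-1$ to the $p$-adic valuation (other terms contribute at least $v_p(n)$). Now for a prime $p\mid a$, coprimality gives $p\nmid b$, so applying this to $W^\psi(a,b)=a\dd^\psi b-b\dd^\psi a$ yields $v_p(W^\psi(a,b))\ge v_p(a)-1$. The analogous bound for $p\mid b$ is symmetric. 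For a prime $p\mid c$, I would use the crucial defining property of $\mathcal{T}(a,b)$, namely $\dd^\psi a+\dd^\psi b=\dd^\psi c$, to rewrite
$$W^\psi(a,b)=a\dd^\psi b-b\dd^\psi a = a\dd^\psi c - c\dd^\psi a,$$
and then, since $p\nmid a$, conclude $v_p(W^\psi(a,b))\ge v_p(c)-1$. Pairwise coprimality of $a,b,c$ then gives the divisibility
$$\frac{abc}{\rad(abc)}\,\Big|\, W^\psi(a,b).$$

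For the analytic upper bound, I would invoke formula \eqref{EqnW}, bound each inner sum with the triangle inequality and Lemma \ref{Lemmavbd}, obtaining
$$|W^\psi(a,b)|\le ab\cdot\|\psi\|\cdot\frac{\log a+\log b}{2\log 2}\le ab\cdot\|\psi\|\cdot\frac{\log c}{\log 2},$$
where the last step uses $ab<c^2$. Combining this with the lower bound (which is at least $1$, since $W^\psi(a,b)$ is a nonzero integer by $\psi$-independence) gives
$$\frac{abc}{\rad(abc)}\le |W^\psi(a,b)|\le ab\cdot\|\psi\|\cdot\frac{\log c}{\log 2},$$
and the factor $ab$ cancels to leave precisely the stated inequality.

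The main conceptual step is the divisibility statement for primes dividing $c$: this is where $\psi\in\mathcal{T}(a,b)$ (rather than an arbitrary derivation) is used, via the rewriting $W^\psi(a,b)=a\dd^\psi c-c\dd^\psi a$. This mirrors exactly what happens in Snyder's argument for polynomials, where one exploits $a+b=c$ and $a'+b'=c'$ to cycle through the three "factorizations" of the Wronskian. Everything else is routine bookkeeping using Lemma \ref{Lemmavbd} and formula \eqref{EqnW}.
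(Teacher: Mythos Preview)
Your proof is correct and follows essentially the same approach as the paper's. The paper packages your $p$-adic valuation computation into a separate Lemma~\ref{Lemmarad} (that $n$ divides $\gcd(n,\dd^\psi n)\cdot\rad(n)$) and phrases the cyclic rewriting as $W^\psi(a,b)=W^\psi(a,c)=W^\psi(c,b)$, but the substance---divisibility of $W$ by $abc/\rad(abc)$ via the three Wronskian expressions, then the upper bound from \eqref{EqnW} and Lemma~\ref{Lemmavbd}---is identical.
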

For the proof, we need a simple observation.

\begin{lemma}\label{Lemmarad} For any positive integer $n$ and any $\psi\in \Tcal$, we have that $n$ divides $\gcd(n, \dd^\psi n)\cdot\rad(n)$.
\end{lemma}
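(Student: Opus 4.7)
The plan is to verify the divisibility $n \mid \gcd(n, \dd^\psi n) \cdot \rad(n)$ one prime at a time. It suffices to show that for every prime $p$ one has $v_p(n) \le v_p(\gcd(n,\dd^\psi n)) + v_p(\rad(n))$. If $p \nmid n$ both sides vanish, so the content lies with primes $p$ dividing $n$; for such $p$ we have $v_p(\rad(n)) = 1$ and $v_p(\gcd(n,\dd^\psi n)) = \min\{v_p(n), v_p(\dd^\psi n)\}$. The required inequality then reduces to the single bound
\[
v_p(\dd^\psi n) \ge v_p(n) - 1 \qquad \text{for every } p \mid n,
\]
since whenever $v_p(\dd^\psi n) \ge v_p(n)$ the minimum already equals $v_p(n)$ and the desired inequality is immediate, while in the opposite case it amounts exactly to the displayed bound.

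To establish this bound I would use the defining formula
\[
\dd^\psi n \;=\; \psi(\dd n) \;=\; \sum_{q \mid n} \frac{n\, v_q(n)}{q}\,\psi(\xi_q)
\]
and estimate the $p$-adic valuation of each summand separately. The factor $n v_q(n)/q$ is an honest integer (this is noted right after the definition of $\dd$ in the introduction) and has $p$-adic valuation $v_p(n) - v_p(q) + v_p(v_q(n))$. For the summand indexed by $q = p$ this is at least $v_p(n) - 1$; for summands with $q \neq p$ one has $v_p(q) = 0$, so the contribution is already at least $v_p(n)$. Multiplying by $\psi(\xi_q) \in \Z$ cannot decrease any of these valuations, so every summand has $p$-adic valuation at least $v_p(n) - 1$. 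The non-archimedean triangle inequality then yields $v_p(\dd^\psi n) \ge v_p(n) - 1$, as required.

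There is essentially no obstacle here: the argument is a direct valuation computation, and no hypothesis on $\psi$ beyond $\psi \in \Tcal$ (in fact, beyond $\psi \in \Hom_\Z(\Omega, \Z)$) is needed. The only subtlety worth flagging is that the termwise valuation estimates make sense precisely because each summand $n v_q(n)\psi(\xi_q)/q$ is an integer, which is exactly the integrality observation built into the original definition of $\dd : \Z \to \Omega$.
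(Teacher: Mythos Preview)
Your proof is correct. The paper's argument reaches the same conclusion more quickly by a global rather than a local (prime-by-prime) observation: since $\dd^\psi n = n\sum_{p\mid n} \frac{v_p(n)}{p}\psi(\xi_p)$ and $\rad(n)/p\in\Z$ for every $p\mid n$, multiplying through by $\rad(n)$ immediately gives $n\mid (\dd^\psi n)\cdot\rad(n)$; together with the trivial $n\mid n\cdot\rad(n)$ this yields $n\mid \gcd(n,\dd^\psi n)\cdot\rad(n)$. Your valuation computation unpacks exactly this divisibility at each prime, so the two arguments are the same idea in different clothing; the paper's version is terser, while yours makes the mechanism at each prime fully explicit.
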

\begin{proof} $n$ divides $n\cdot \rad(n)$. From the definition of $\dd^\psi$, we see that $n$ divides $(\dd^\psi n )\cdot \rad(n)$.
\end{proof}
\begin{proof}[Proof of Theorem \ref{Thmabc}] The equation $\dd^\psi a+\dd^\psi b=\dd^\psi c$ gives
$$
W:=W^\psi(a,b)=W^\psi(a,c)=W^\psi(c,b)
$$
which is non-zero because $a,b$ are $\psi$-independent. By Lemma \ref{Lemmarad}, we see that $a/\rad(a)$ divides $W=W^\psi(a,b)$, and similarly for $b$ and $c$. By coprimality of $a$, $b$, and $c$ we get that $abc$ divides $W\cdot \rad(abc)$. Since $W\ne 0$, we conclude $abc\le |W|\cdot \rad(abc)$. From \eqref{EqnW} we deduce
$$
\frac{abc}{\rad(abc)}\le |W|=ab \left|\sum_p\frac{v_p(a)}{p}\psi(\xi_p)-\sum_p\frac{v_p(b)}{p}\psi(\xi_p)\right|\le ab\|\psi\|  \sum_{p|ab}\frac{v_p(ab)}{p}\le ab\|\psi\| \cdot  \frac{\log (ab)}{2\log 2}
$$
where the last bound is by Lemma \ref{Lemmavbd}. The result follows from $\log(ab)\le 2\log c$.
\end{proof}

\subsection{Small arithmetic derivatives}\label{SecHeuristic}

In view of Theorem \ref{Thmabc}, we cannot avoid the question of existence of small derivations $\psi\in \Tcal(a,b)$ subject to the condition that $a,b$ be $\psi$-independent. A first result is directly deduced from  Lemma \ref{LemmaDep} and Theorem \ref{ThmExistence}.

\begin{lemma}[Small arithmetic derivatives satisfying independence]\label{LemmaSmall} Let $a,b$ be coprime positive integers with $(a,b)\ne (1,1)$ and let $c=a+b$. Let $r=\omega(abc)-1$ and note that $r\ge 1$. For any list of linearly independent derivations $\psi_1,...,\psi_r\in \Tcal(a,b)$ there is at least one index $1\le i_0\le r$ such that $a,b$ are $\psi_{i_0}$-independent. Furthermore, choosing $\psi_1,...,\psi_r$ as in Theorem \ref{ThmExistence} we get
$$
\|\psi_{i_0}\| \le \frac{\omega(abc)}{2\log 2} \cdot c\log c.
$$
\end{lemma}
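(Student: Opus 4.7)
The proof is essentially a dimension-counting argument combined with the norm bound from Theorem \ref{ThmExistence}, so it should be very short. Here is the plan.

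First I would invoke Lemma \ref{LemmaDep}, which tells us that $\Tcal^\circ(a,b)$ is a saturated $\Z$-submodule of $\Tcal(a,b)$ of rank exactly $r-1$, where $r=\omega(abc)-1=\rk\Tcal(a,b)$. In particular, $\Tcal^\circ(a,b)$ is a proper submodule and cannot contain any $\Z$-linearly independent family of size $r$. Thus, if $\psi_1,\ldots,\psi_r\in \Tcal(a,b)$ are linearly independent, they cannot all lie in $\Tcal^\circ(a,b)$, so there is some index $1\le i_0\le r$ for which $\psi_{i_0}\notin \Tcal^\circ(a,b)$. By the definition of $\Tcal^\circ(a,b)$, this is exactly the condition that $a,b$ are $\psi_{i_0}$-independent. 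This already proves the first assertion for an arbitrary linearly independent family.

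For the quantitative part, I would now specialize to the derivations $\psi_1,\ldots,\psi_r$ produced by Theorem \ref{ThmExistence}, which satisfy
$$\prod_{i=1}^r \|\psi_i\| \le \frac{\omega(abc)}{2\log 2}\cdot c\log c.$$
Since these derivations are linearly independent, each $\psi_i$ is non-zero, and because $\psi_i\in \Hom_\Z(\Omega,\Z)$ takes integer values, the observation recalled in the Introduction gives $\|\psi_i\|\ge 1$ for every $i$. Hence every factor in the displayed product is at least $1$, so in particular the factor $\|\psi_{i_0}\|$ is bounded by the full product, yielding
$$\|\psi_{i_0}\|\le \prod_{i=1}^r \|\psi_i\|\le \frac{\omega(abc)}{2\log 2}\cdot c\log c,$$
which is the required estimate.

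There is no real obstacle here: the statement is essentially a formal consequence of Lemma \ref{LemmaDep} (which upgrades Lemma \ref{LemmaExistence} by cutting out a codimension-one subspace) together with the product bound from Theorem \ref{ThmExistence}. The only point worth emphasizing in the write-up is the harmless but necessary lower bound $\|\psi_i\|\ge 1$, which is what converts the product bound into a bound on a single term.
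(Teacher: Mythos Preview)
Your proof is correct and follows exactly the route the paper indicates: it is deduced directly from Lemma \ref{LemmaDep} (the rank-counting argument forcing some $\psi_{i_0}\notin\Tcal^\circ(a,b)$) together with the product bound of Theorem \ref{ThmExistence} and the observation $\|\psi_i\|\ge 1$.
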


\begin{example} Let $q=2^n-1$ be a Mersenne prime and take $a=1$, $b=q$, $c=2^n$. Then $\Tcal(1,q)=\Z\cdot \psi_1$ where the $\psi_1(\xi_2)=1$, $\psi_1(\xi_q)=n\cdot 2^{n-1}$, and $\psi_1(p)=0$ for all $p\ne 2,q$. Thus, in this example the bound given by Lemma \ref{LemmaSmall} is sharp up to a factor of $2$, because we actually have:
$$
\|\psi_1\| = n\cdot 2^{n-1} = \frac{\omega(abc)}{4\log 2}\cdot c\log c. 
$$
\end{example}

Unfortunately, Lemma \ref{LemmaSmall} combined with Theorem \ref{Thmabc} falls short of proving the $abc$ Conjecture. Nevertheless, it clarifies the fact that in order to prove the $abc$ Conjecture one must get a power-saving improvement over the bound in Lemma \ref{LemmaSmall}.

Optimistically, we may expect that in Theorem \ref{ThmExistence} one can choose the $\psi_i$ such that all the $\log \|\psi_i\| $ have roughly the same size. Proceeding as in Lemma \ref{LemmaSmall}, if $\omega(abc)\ge 3$ (i.e. $r\ge 2$) this would give the desired power-saving improvement. Regarding the condition $\omega(abc)\ge 3$, we have:
\begin{lemma}\label{LemmaCatalan} Up to order, the only triples of  coprime positive integers $a,b,c$ with $a+b=c$ having $\omega(abc)\le 2$ are the following: $(1,1,2)$, $(1,8,9)$, and $(1,2^n,q)$ with $q$ prime and $n\ge 1$. 
\end{lemma}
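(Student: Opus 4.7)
The plan is to perform a case analysis on $\omega(abc)\in\{1,2\}$, using that the hypothesis $a+b=c$ with $\gcd(a,b)=1$ forces $a,b,c$ to be pairwise coprime: any prime dividing two of them would divide the third, contradicting $\gcd(a,b)=1$. Since $c\ge 2$ we automatically have $\omega(abc)\ge 1$, so there are only two cases to handle.

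First I would dispose of $\omega(abc)=1$: a single prime $p$ divides $abc$, and pairwise coprimality allows at most one of $a,b,c$ to be divisible by $p$, so the other two must equal $1$. Because $c=a+b\ge 2$ cannot be the entry equal to $1$, we are forced into $a=b=1$ and $c=2$, yielding the triple $(1,1,2)$.

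Next, for $\omega(abc)=2$ with the two primes called $p$ and $q$, pairwise coprimality forces each of $a,b,c$ to be either $1$ or a pure power of $p$ or of $q$, and no two of them can share a prime. Since only two primes are available to carry three entries larger than $1$, at least one of $a,b,c$ must equal $1$; because $c\ge 2$, we may assume $a=1$ after swapping $a$ and $b$. Excluding $b=1$ (which folds back into $(1,1,2)$), the equation reduces to $p^i+1=q^j$ for distinct primes $p\ne q$ and exponents $i,j\ge 1$.

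The crux is then classifying prime-power solutions of $q^j-p^i=1$. When both exponents are $\ge 2$, Mihailescu's theorem (the resolved Catalan Conjecture) supplies the unique solution $3^2-2^3=1$, producing the triple $(1,8,9)$. Otherwise $i=1$ or $j=1$, so one of $b,c$ is itself prime; moreover, since $q^j-p^i=1$ forces exactly one of $p^i$ and $q^j$ to be even, the other entry is a power of $2$. Writing the power of $2$ as $2^n$ and the odd prime as $q$, the triple takes the shape $(1,2^n,q)$ up to reordering, which simultaneously covers the Fermat case $q=2^n+1$ (arising from $j=1$) and the Mersenne case $q=2^n-1$ (arising from $i=1$). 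The main obstacle is the invocation of Mihailescu's theorem to rule out further solutions with $i,j\ge 2$; this is a deep input, but it is the natural tool here and can in principle be replaced by elementary parity and cyclotomic factorization arguments tailored to $q^j-p^i=1$.
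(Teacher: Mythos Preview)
Your argument is correct and follows exactly the route the paper indicates: the paper's proof consists of the single line ``This follows from Mihailescu's theorem,'' and your case analysis spells out precisely how. One caveat: your closing remark that the appeal to Mihailescu ``can in principle be replaced by elementary parity and cyclotomic factorization arguments'' is not justified---the equation $q^j-p^i=1$ with $i,j\ge 2$ and $p,q$ prime is not known to admit an elementary treatment short of Mihailescu's full result, so that aside should be dropped.
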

This follows from Mihailescu's theorem \cite{Mihailescu}. Of course, it is not known  whether there are infinitely many primes of the form $q=2^n+1$ (Fermat primes) or $q=2^n-1$ (Mersenne primes).

There is, however, an additional caveat in the previous heuristic. If $a,b,c$ are, up to order, $1,q,N$ for some prime $q$, then from the defining equations \eqref{EqnAdd} and \eqref{EqnDep} we see that every $\psi\in \Tcal^\circ(a,b)$ satisfies the unexpected condition $\psi(\xi_q)=0$. If in addition $N$ is the product of powers of small primes, then it can happen that $\Tcal^\circ(a,b)$ is generated by unusually small derivations, in which case our heuristic justification on how to get a power-saving improvement over Lemma \ref{LemmaSmall} fails.

\begin{example} Consider $a=1$, $b=108=2^2\cdot 3^3$, and $c=q=109$. Then $r=2$ and the group $\Tcal^0(1,108)\simeq \Z$ is generated by the derivation $\psi_1$ determined by $(\psi_1(2),\psi_2(3),\psi_3(109)) = (1,-1,0)$. On the other hand, any derivation $\psi_2\in \Tcal(1,108)$ which is linearly independent from $\psi_1$ satisfies $\|\psi_2\|\ge 108$, with equality achieved (for instance) at $(\psi_2(2),\psi_2(3),\psi_2(109)) = (2,-1,108)$.
\end{example}

The previous considerations motivate our main conjecture:

\begin{conjecture}[Small Derivatives Conjecture]\label{ConjSDC} There is an absolute constant $0<\eta<1$ such that for all but finitely many triples of coprime positive integers $(a,b,c)$ satisfying $a+b=c$ and not of the form $(1,N, q)$  with $q$ prime (up to order), the following holds: There is $\psi\in \Tcal(a,b)$ such that $a,b$ are $\psi$-independent and $\|\psi\|  < c^{\eta}$.
\end{conjecture}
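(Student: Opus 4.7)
The plan is to apply Geometry of Numbers to the lattice $\Tcal(a,b)$ and exploit the codimension-one structure of the sublattice $\Tcal^\circ(a,b) \subset \Tcal(a,b)$. Under the assumption that $(a,b,c)$ is not of the form $(1, N, q)$, Lemma \ref{LemmaCatalan} guarantees $r := \omega(abc) - 1 \ge 2$; by Lemmas \ref{LemmaExistence} and \ref{LemmaDep}, $\Tcal(a,b)$ has rank $r$ while $\Tcal^\circ(a,b)$ is a proper saturated sublattice of rank $r-1$.

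Let $\lambda_1 \le \cdots \le \lambda_r$ denote the successive minima of $\Tcal(a,b)$ with respect to $\|\cdot\|$, and fix linearly independent $\psi_i \in \Tcal(a,b)$ with $\|\psi_i\| = \lambda_i$. Applying Theorem \ref{ThmSL} to the defining equation \eqref{EqnAdd}, whose coefficients are positive integers bounded by $c \log_2(c)/2$, yields $\prod_{i=1}^r \lambda_i \le C_r \cdot c \log c$ for a constant $C_r$ depending only on $r$. Since $\Tcal^\circ(a,b)$ has rank $r-1$, at least one $\psi_{i_0}$ must lie outside $\Tcal^\circ(a,b)$, making $a,b$ be $\psi_{i_0}$-independent; choose $i_0$ to be the smallest such index. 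The conjecture then reduces to showing $\lambda_{i_0} < c^\eta$ for some absolute $\eta < 1$. Heuristically, if the $\lambda_i$ are of comparable size, each is of order $c^{1/r}$, and for $r \ge 2$ this predicts $\lambda_{i_0} \le c^{1/2 + \epsilon}$, in agreement with the exponent of Theorem \ref{ThmNonZero}, which already establishes the analogous bound under the weaker non-degeneracy condition $(\dd^\psi a, \dd^\psi b, \dd^\psi c) \ne (0,0,0)$.

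Excluding the triples $(1,N,q)$ is essential: the example $(1, 108, 109)$ shows that when one of the summands is prime, the equations \eqref{EqnAdd} and \eqref{EqnDep} decouple the isolated prime from the rest, so $\Tcal^\circ(a,b)$ can contain an unusually short derivation ($\lambda_1 = O(1)$) while any direction of $\Tcal(a,b)$ transverse to $\Tcal^\circ(a,b)$ is forced to have norm of order $c$, so that $i_0 = 2$ corresponds to a vector of size $\asymp c$. The hard part will be ruling out such large gaps outside the excluded family. The product bound $\prod \lambda_i \le C_r \cdot c \log c$ combined with $\lambda_1 \ge 1$ only yields $\lambda_r \le O(c\log c)$ when $r = 2$, which is far short of $c^\eta$ for fixed $\eta < 1$. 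Since Theorem \ref{ThmEquiv} of the paper identifies any power-saving improvement of this form with the $abc$ Conjecture, the genuine obstacle is $abc$ itself. A realistic conditional strategy is therefore to assume the Masser-Oesterl\'e form of $abc$ and construct the desired $\psi$ by a dual extremal argument driven by the resulting lower bound $\rad(abc) \ge c^{1/(1+\epsilon)}$, which forces the lattice $\Tcal(a,b)$ to be sufficiently balanced to preclude the pathological gap illustrated by the excluded family.
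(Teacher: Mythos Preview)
The statement is a \emph{conjecture}; the paper does not prove it unconditionally, and neither do you. What the paper does establish is the conditional implication (Theorem \ref{ThmEquiv}): Oesterl\'e's $abc$ with some exponent $1<M<2$ implies the Small Derivatives Conjecture with any $\eta > 1-(2-M)/(4M)$. Your outline converges on exactly this conditional strategy, and the global architecture you describe---ordering $\psi_1,\dots,\psi_r$ by norm, taking the least $i_0$ with $\psi_{i_0}\notin\Tcal^\circ(a,b)$, splitting into the cases $i_0<r$ (handled by the product bound alone) and $i_0=r$ (the hard case requiring $abc$)---matches the paper's proof of Theorem \ref{ThmEquiv} precisely.

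The genuine gap is in the hard case $i_0=r$. You write that $abc$ ``forces the lattice $\Tcal(a,b)$ to be sufficiently balanced'' via ``a dual extremal argument,'' but this is where all the content lies, and the paper's mechanism is not a lattice-duality argument at all. It is Lemma \ref{LemmaKey}: a direct \emph{divisibility} argument showing that linearly independent $\psi_1,\dots,\psi_{r-1}\in\Tcal^\circ(a,b)$ satisfy $\prod_i\|\psi_i\|\ge c^{\mu}/\prod_{p\mid abc}v_p(abc)$ with $\mu=(2-M)/(4M)$. The key observation is that the equations \eqref{EqnDepDep} force, for every prime $p\mid abc$, that $p$ divides $v_p(abc)\,\psi(\xi_p)$; hence whenever $\psi_i(\xi_p)\ne 0$ one gets $\|\psi_i\|\ge p/v_p(abc)$. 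One then needs a combinatorial step (the determinant trick of Lemma \ref{LemmaDet}) to assign to each $\psi_i$ a distinct prime $p_i$ with $\psi_i(\xi_{p_i})\ne 0$, including the largest available prime, so that $\prod_i p_i$ is comparable to $\rad(abc)$. The $abc$ hypothesis then converts $\rad(abc)$ into a power of $c$. This argument also requires a genuine case analysis (cases (i)--(iii) in the paper) to handle the situations where $ab$ or $c$ has few prime factors, and it is precisely here that the excluded family $(1,N,q)$ and the exceptional triple $(1,8,9)$ must be treated separately. None of this is visible in your sketch; without Lemma \ref{LemmaKey} the conditional argument does not close.
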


  The crucial aspects of Conjecture \ref{ConjSDC} are that the exponent $\eta$ is strictly less than $1$ and that $a,b$ must be $\psi$-independent.  Some of our results provide unconditional evidence:
\begin{itemize}

\item Corollary \ref{CoroExistence} shows that if we completely drop the $\psi$-independence condition, then the desired bound holds for any  $\eta>0$, for those triples $a,b,c$ satisfying $\omega(abc)>1+1/\eta$.

\item Theorem \ref{ThmNonZero} shows that if we replace the $\psi$-independence condition by the weaker requirement that $\dd^\psi(a)$ or $\dd^\psi(b)$ be non-zero, then one can indeed achieve a bound with exponent $\eta<1$ ---in fact, any $\eta>1/2$ works. (Note that if $\psi\in \Tcal(a,b)$ and $a,b$ are $\psi$-independent, then necessarily $\dd^\psi(a)$ or $\dd^\psi(b)$ is non-zero.) 
 
\item  Lemma \ref{LemmaSmall} shows that if we keep the $\psi$-independence condition, then a version of the Small Derivatives Conjecture holds with exponent $\eta=1+\epsilon$ rather than the sought  $\eta<1$.

\end{itemize}

\subsection{Proof of concept: Fermat's Last Theorem}\label{SecFLT} As it is well-known, the analogue of Fermat's Last Theorem (FLT) over polynomials can be deduced from the Mason-Stothers theorem, and the same argument over $\Z$ shows that the $abc$ Conjecture implies the ``asymptotic'' FLT, meaning FLT up to finitely many exponents (of course, FLT was proved by Wiles \cite{Wiles}, while the $abc$ Conjecture remains open.) Let us give a direct  proof\footnote{We make no claim of originality on this argument, although we could not find it in the literature.} of FLT for the polynomial ring $\C[x]$ without using the Mason-Stothers theorem or radicals. Recall that the Wronskian of  $f,g\in \C[x]$ is $W(f,g)=fg'-f'g$. 

\begin{proposition}[FLT for polynomials] Let $n \ge 3$. Let $f,g,h\in \C[x]$ be coprime non-zero polynomials with at least one of them non-constant. Then $f^n+g^n\ne h^n$. 
\end{proposition}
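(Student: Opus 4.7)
The plan is to imitate the Mason--Stothers-style Wronskian argument, but bypassing radicals entirely by exploiting divisibility of Wronskians directly. Suppose for contradiction that $f^n+g^n=h^n$ with $f,g,h$ coprime, nonzero, and not all constant. I will extract a contradiction from size comparisons of the three Wronskians $W(f,g)$, $W(f,h)$, $W(g,h)$.

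First I would compute, via the Leibniz rule, that $W(f^n,g^n)=n(fg)^{n-1}W(f,g)$. The key additivity trick is that for any polynomials $A,B$, we have $W(A,B-A)=W(A,B)$, so
\begin{equation*}
W(f^n,g^n)=W(f^n,h^n-f^n)=W(f^n,h^n)=n(fh)^{n-1}W(f,h),
\end{equation*}
and symmetrically $W(f^n,g^n)=\pm\, n(gh)^{n-1}W(g,h)$. Cancelling the common factor $f^{n-1}$ (resp.\ $g^{n-1}$, $h^{n-1}$) yields the three identities
\begin{equation*}
g^{n-1}W(f,g)=h^{n-1}W(f,h),\quad f^{n-1}W(f,g)=\pm h^{n-1}W(g,h),\quad f^{n-1}W(f,h)=\pm g^{n-1}W(g,h).
\end{equation*}

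Next I would use coprimality. Since $\gcd(g,h)=1$, the first identity forces $h^{n-1}\mid W(f,g)$ and $g^{n-1}\mid W(f,h)$; analogously $f^{n-1}\mid W(g,h)$ (and the other symmetric divisibilities). Assuming first that $W(f,g)\neq 0$ (so all three Wronskians are nonzero, as $f,g,h$ are nonzero), I pass to degrees: each Wronskian $W(u,v)$ has degree at most $\deg u+\deg v-1$, hence
\begin{equation*}
(n-1)\deg h\le \deg f+\deg g-1,\quad (n-1)\deg g\le \deg f+\deg h-1,\quad (n-1)\deg f\le \deg g+\deg h-1.
\end{equation*}
Summing the three inequalities gives $(n-3)(\deg f+\deg g+\deg h)\le -3$, which is impossible when $n\ge 3$ (the left side is nonnegative).

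The only remaining case is that all three Wronskians vanish. Since $\mathbb{C}[x]$ has characteristic zero, $W(f,g)=0$ forces $f/g$ to be a constant (as $(f/g)'=-W(f,g)/g^{2}=0$), and similarly for the other pairs; combined with the coprimality hypothesis this forces $f,g,h$ to all be constants, contradicting the assumption that at least one is non-constant. I expect the main subtlety to be the bookkeeping of signs and the clean use of the identity $W(A,B-A)=W(A,B)$ to route the three Wronskians through one another without ever invoking radicals or the Mason--Stothers theorem; the rest is degree accounting.
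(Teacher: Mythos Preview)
Your proof is correct and follows essentially the same approach as the paper: both derive the key identity $g^{n-1}W(f,g)=h^{n-1}W(f,h)$, use coprimality to extract the divisibility $h^{n-1}\mid W(f,g)$, and finish by a degree comparison. The only cosmetic difference is that the paper makes a WLOG assumption that $h$ has maximal degree and uses a single inequality $(n-1)\deg h\le \deg(fg)-1<2\deg h$, whereas you keep the symmetry, record all three divisibilities, and sum the resulting degree inequalities to reach $(n-3)(\deg f+\deg g+\deg h)\le -3$.
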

\begin{proof} For the sake of contradiction, suppose that  $f^n+g^n=h^n$. Without loss of generality, assume that $h$ has the largest degree among $f,g,h$.  Note that $W(f,h)\ne 0$, for otherwise we would have $f=\lambda h$ and $g=(1-\lambda)h$ for some $\lambda\in \C$, which is not possible. 

Taking derivatives and multiplying by $f$ we find $f^{n}f'+fg^{n-1}g' = fh^{n-1}h'$. Using $f^nf'=(h^n-g^n)f'$ we get $g^{n-1}W(f,g)=h^{n-1}W(f,h)$. Since $W(f,h)\ne 0$ and $g,h$ are coprime, we find 
$$
(n-1)\deg(h)\le \deg W(f,g) \le \deg(fg)-1< 2\deg(h) 
$$
which implies $n<3$; contradiction.
\end{proof}

Our theory of arithmetic derivatives affords a smooth translation of the previous proof into the setting of integers, conditional on the Small Derivatives Conjecture \ref{ConjSDC}.
\begin{proposition}[Asymptotic FLT conditional on the Small Derivatives Conjecture] \label{PropFLTAD} Assume Conjecture \ref{ConjSDC}. There is a positive integer $n_0$ such that for all $n\ge n_0$ the following holds: If $a,b,c$ are coprime positive integers, then $a^n+b^n\ne c^n$. 
\end{proposition}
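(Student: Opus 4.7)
The plan is to translate the polynomial FLT argument to $\Z$ by applying the Small Derivatives Conjecture to the triple $(A,B,C) := (a^n,b^n,c^n)$, rather than to $(a,b,c)$. Suppose, for contradiction, that coprime positive-integer solutions to $a^n + b^n = c^n$ exist for arbitrarily large exponents $n$. First I would check that for $n \geq 2$ the triple $(A,B,C)$ is coprime and positive, that $(A,B) \neq (1,1)$ (else $c^n = 2$), and that $(A,B,C)$ is not of the excluded form $(1,N,q)$ with $q$ prime (up to order), since matching this form would force $c^n$ (or $a^n$ or $b^n$) to be prime and hence $n = 1$.

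Conjecture \ref{ConjSDC} then supplies, outside a finite exceptional set, a derivation $\psi \in \Tcal(A,B)$ with $A,B$ being $\psi$-independent and $\|\psi\| < C^{\eta} = c^{n\eta}$. The finitely many exceptional triples have $C$ bounded, while $C = c^n$ tends to infinity along our family; hence for $n$ larger than a constant the derivation $\psi$ exists. Feeding $\psi$ into Theorem \ref{Thmabc} applied to $(A,B,C)$, and using $\rad(a^nb^nc^n) = \rad(abc) \leq abc < c^3$ (since $a,b < c$), I get
\[
\frac{c^n}{n\log c} \;=\; \frac{C}{\log C} \;\leq\; \rad(ABC)\cdot\frac{\|\psi\|}{\log 2} \;<\; \frac{c^{3+n\eta}}{\log 2},
\]
which rearranges to $c^{\,n(1-\eta)-3} < n\log c/\log 2$.

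To close the argument I would combine this with the elementary estimate $n < 2c$, obtained from $c^n = a^n + b^n \leq 2(c-1)^n$ together with $\log(1 + 1/(c-1)) \geq 1/(2(c-1))$ for $c \geq 2$. Choosing $n_0 \geq 6/(1-\eta)$ ensures $n(1-\eta)-3 \geq 3$ whenever $n \geq n_0$, and the displayed bound then yields $c^3 < 2c\log c/\log 2$, i.e.\ $c^2 < (2/\log 2)\log c$. This inequality fails for every integer $c \geq 2$, delivering the desired contradiction and hence the absence of solutions for all $n \geq n_0$.

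The main obstacle I foresee is purely bookkeeping: guaranteeing that the finite exceptional set of Conjecture \ref{ConjSDC} and the excluded $(1,N,q)$-triples absorb only finitely many exponents $n$. Both are handled by the observation that either $c^n$ is bounded by a constant (pinning down $n$) or $c^n$ is prime (forcing $n = 1$); neither scenario survives once $n$ exceeds an absolute constant, so $n_0$ may be taken large enough to cover all three conditions simultaneously.
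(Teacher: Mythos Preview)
Your argument is correct, but it takes a different route from the paper's. The paper deliberately avoids invoking Theorem~\ref{Thmabc} (and hence radicals) and instead translates the polynomial Wronskian computation line by line: from $a^n+b^n=c^n$ and the Leibniz rule one derives $b^{n-1}W^\psi(a,b)=c^{n-1}W^\psi(a,c)$, and since $\gcd(b,c)=1$ and $W^\psi(a,b)\ne 0$ this forces $c^{n-1}\le |W^\psi(a,b)|<2\|\psi\|\,c^2\log c<2c^{2+n\eta}\log c$, yielding $n\le 3/(1-\eta)$ up to finitely many triples. Your approach instead feeds the SDC derivation into the $abc$-type bound of Theorem~\ref{Thmabc} and controls $\rad(a^nb^nc^n)=\rad(abc)<c^3$. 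This is perfectly valid and arguably more modular, but it obscures the point the section is trying to make---that the arithmetic-derivative formalism lets one \emph{mimic the polynomial proof directly}, without passing through a Mason--Stothers/radical statement. It also costs a factor of two in the exponent threshold (you need roughly $n_0\ge 6/(1-\eta)$ rather than $3/(1-\eta)$), though this is of course irrelevant for the asymptotic statement.
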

\begin{proof} Assume Conjecture \ref{ConjSDC} with some exponent $\eta<1$ and let $n\ge 2$ be a positive integer. Thus, for all but finitely many triples of coprime integers $a,b,c$ with $a^n+b^n=c^n$ there is $\psi\in \Tcal(a^n,b^n)$ such that $\|\psi\| < c^{n\cdot \eta}$ and $W^\psi(a^n,b^n)\ne 0$ ($a^n,b^n,c^n$ are not prime).  Note that $\dd^\psi(a^n)=na^{n-1}\dd^\psi a$ by Lemma \ref{LemmaLeibnizAD} and similarly for $b$, so $W^\psi(a^n,b^n)=n(ab)^{n-1}W^\psi(a,b)$, concluding $W^\psi(a,b)\ne 0$. 

Starting from $a^n+b^n=c^n$ we repeat the computation from the polynomial case using  Lemma \ref{LemmaLeibnizAD} and the fact that $\psi\in \Tcal(a^n,b^n)$. We get $b^{n-1}W^\psi(a,b)=c^{n-1}W^\psi(a,c)$. Since $W^\psi(a,b)\ne 0$ and $b,c$ are coprime, Lemma \ref{Lemmavbd} yields
$$
c^{n-1}\le |W^\psi(a,b)| = |a\dd^\psi b - b\dd^\psi a|< \|\psi\| \cdot 2c^2\log c<2c^{2+n\cdot \eta}\log c
$$
Up to finitely many triples $(a,b,c)$ this shows $n\le  3/(1-\eta)$, which suffices to prove the result.
\end{proof}

In  Section \ref{SecEquiv} we will show that the Small Derivatives Conjecture is equivalent to the $abc$ Conjecture and, in this way, one can prove Proposition \ref{PropFLTAD} by using the $abc$ Conjecture as an intermediate step. Nevertheless, the previous proof gives an example of how to use our arithmetic derivatives to directly translate arguments from function field arithmetic to the integers. 


\section{Small arithmetic derivatives are equivalent to the $abc$ Conjecture}\label{SecEquiv}

\subsection{The Small Derivatives Conjecture implies the $abc$ Conjecture}

\begin{lemma}\label{LemmaEquiv}
If the Small Derivative Conjecture \ref{ConjSDC} holds for some value of $\eta$, then Oesterl\'e's $abc$ Conjecture \ref{ConjWeakabc} holds for every $M>1/(1-\eta)$.
\end{lemma}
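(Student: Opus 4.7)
The plan is to directly chain Conjecture \ref{ConjSDC} into the $abc$-type estimate of Theorem \ref{Thmabc}, rearrange, and dispose of the excluded family $(1,N,q)$ by a trivial radical inequality. Fix $\eta$ for which SDC holds, pick $M>1/(1-\eta)$, and choose an auxiliary exponent $M_0$ with $1/(1-\eta)<M_0<M$.

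First I would handle the excluded triples $\{a,b,c\}=\{1,N,q\}$ (up to order) with $q$ prime. Since $q\mid\rad(abc)$ and $q\ge c-1$, we obtain $c\le\rad(abc)+1$, so $c<\rad(abc)^M$ holds for all but finitely many such triples. For the remaining non-excluded triples, after discarding the finite exceptional set permitted by SDC, I extract $\psi\in\Tcal(a,b)$ with $a,b$ being $\psi$-independent and $\|\psi\|<c^\eta$. Feeding this into Theorem \ref{Thmabc} gives
$$
\frac{c}{\log c}\le \rad(abc)\cdot\frac{\|\psi\|}{\log 2}<\frac{c^\eta\cdot\rad(abc)}{\log 2},
$$
which rearranges to $c^{1-\eta}\log 2<\rad(abc)\log c$. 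Raising to the $M_0$-th power and using $M_0(1-\eta)>1$, the polynomial factor $c^{M_0(1-\eta)}$ dominates $c\cdot(\log c)^{M_0}$ for $c$ sufficiently large, yielding $c<\rad(abc)^{M_0}$ for all but finitely many triples.

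All remaining exceptional triples (finitely many from SDC, from the small excluded family, and from the need for $c$ large in the previous step) have $c$ bounded and $\rad(abc)\ge 2$. The slack $M-M_0>0$ provides an extra factor $\rad(abc)^{M-M_0}\ge 2^{M-M_0}$; by taking $M_0$ close enough to $1/(1-\eta)$, so that $M-M_0$ is as close as desired to $M-1/(1-\eta)>0$, this factor dominates every bounded $c$ among the exceptions, absorbing them and producing $c<\rad(abc)^M$ uniformly.

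I do not expect any genuine obstacle: the argument is essentially a single composition of two already-stated facts, namely the hypothesis of SDC and the unconditional $abc$ estimate Theorem \ref{Thmabc}. The only mildly delicate point is deploying the margin $M-1/(1-\eta)$ both to swallow the polylogarithmic factor arising from Theorem \ref{Thmabc} and to handle the finite set of small exceptions; this is achieved by the auxiliary exponent $M_0$, which keeps the asymptotic analysis and the absorption of bounded cases logically separate.
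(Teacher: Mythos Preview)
Your core argument is exactly the paper's: dispose of the $(1,N,q)$ triples directly via $\rad(abc)\ge q\ge c-1$, then plug SDC into Theorem~\ref{Thmabc} to get $c^{1-\eta}\log 2<\rad(abc)\log c$ for all but finitely many remaining triples, whence $c<\rad(abc)^M$ for any $M>1/(1-\eta)$ once $c$ is large. The paper simply stops there with ``the result follows''.

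Your third paragraph, attempting to absorb the finitely many exceptions via the slack $M-M_0$, is flawed: as $M_0\to 1/(1-\eta)$ the factor $2^{M-M_0}$ converges to the \emph{fixed} number $2^{M-1/(1-\eta)}$ rather than growing, while the threshold for ``$c$ large'' in your previous step tends to infinity, so the exceptional set you must absorb grows without bound. The absorption therefore cannot work as described. The paper makes no such attempt and is content with the ``all but finitely many'' conclusion; you may drop the auxiliary $M_0$ and the third paragraph entirely.
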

\begin{proof} Assume Conjecture \ref{ConjSDC} for some exponent $0<\eta<1$. If up to order we have $(a,b,c)=(1,N,q)$ with $q$ prime and $N\ge 2$, then $\rad(abc)\ge 2q> q+1\ge c$, hence, the $abc$ Conjecture holds in such cases. So, we may assume we are not in the previous case. For all but finitely many triples of coprime positive integers $a,b,c$ with $a+b=c$ we have
$$
\frac{c}{\log c} < \rad(abc)\cdot \frac{c^\eta}{\log 2}
$$
where we applied Theorem \ref{Thmabc} and Conjecture \ref{ConjSDC}. The result follows.
\end{proof}

It turns out that the converse is also true (cf. Theorem \ref{ThmEquiv}), but the proof is more delicate. 

\subsection{Preliminary lemmas}
\begin{lemma}\label{LemmaDet} Let $K$ be a field and let $m<n$ be positive integers. Let ${\bf v}_i=(v_{i,1},...,v_{i,n})\in K^n$ for $1\le i\le m$ be linearly independent over $K$. Let $j_0$ be such that $v_{i,j_0}\ne 0$ for some $i$. There is an injective function $\tau:\{1,...,m\}\to \{1,...,n\}$ such that $j_0$ is in the image of $\tau$ and for each $1\le i\le m$ we have $v_{i,\tau(i)}\ne 0$. 
\end{lemma}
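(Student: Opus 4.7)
The plan is to recast the statement in matrix terms: let $V$ be the $m\times n$ matrix over $K$ whose $i$-th row is ${\bf v}_i$. The hypothesis that the ${\bf v}_i$ are linearly independent means $V$ has rank $m$, and the hypothesis on $j_0$ says exactly that the $j_0$-th column ${\bf c}_{j_0}$ of $V$ is a nonzero vector of $K^m$.

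First I would apply the Steinitz exchange (basis extension) to the columns of $V$. Since $\rk V=m$, the columns span $K^m$; since ${\bf c}_{j_0}\ne 0$, the singleton $\{{\bf c}_{j_0}\}$ is linearly independent, so it extends to a basis of $K^m$ by adjoining $m-1$ further columns of $V$. In other words, there exist distinct column indices $k_1:=j_0, k_2, \ldots, k_m \in \{1,\ldots,n\}$ such that the $m\times m$ submatrix $V'=(v_{i,k_j})_{i,j}$ is invertible, i.e.\ $\det V'\ne 0$.

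Next I would expand this determinant by the Leibniz formula,
\[
\det V' = \sum_{\sigma\in S_m} \mathrm{sgn}(\sigma)\prod_{i=1}^m v_{i,k_{\sigma(i)}}.
\]
Since $\det V'\ne 0$, at least one term in the sum is nonzero; pick a permutation $\sigma\in S_m$ with $\prod_i v_{i,k_{\sigma(i)}}\ne 0$, and define $\tau(i):=k_{\sigma(i)}$. Then $\tau:\{1,\ldots,m\}\to\{1,\ldots,n\}$ is injective (being the composition of $\sigma$ with the injection $j\mapsto k_j$), its image equals $\{k_1,\ldots,k_m\}$ and therefore contains $k_1=j_0$, and $v_{i,\tau(i)}\ne 0$ for every $i$. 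This gives the required $\tau$.

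There is no serious obstacle: the only two ingredients are the basis-extension lemma and the Leibniz expansion of a determinant. The small subtlety worth highlighting in the write-up is that one must insist on including column $j_0$ in the chosen basis of columns (which is why the nonvanishing of ${\bf c}_{j_0}$ is used), and that once this is arranged, the nonvanishing of a single summand in the determinant expansion automatically yields an injection whose image contains $j_0$, not merely an injection avoiding zero entries.
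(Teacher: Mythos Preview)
Your proof is correct and follows essentially the same approach as the paper: both arguments select an $m\times m$ invertible submatrix containing column $j_0$ (the paper asserts this directly, you justify it via Steinitz exchange) and then extract a nonvanishing summand from the Leibniz expansion of its determinant to produce $\tau$.
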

\begin{proof} Let $I=\{1,...,m\}$ and $J=\{1,...,n\}$. Let $A=[v_{i,j}]_{i\in I, j\in J}$ and note that this matrix has rank $m$  by linear independence of its rows. The $j_0$-th column is not the zero vector, so we may choose $J'\subseteq J$ with $\# J'=m$ such that the square matrix $A'=[v_{i,j}]_{i\in I, j\in J'}$ still has rank $m$. In particular, $\det(A')\ne 0$. Writing $\det(A')=\sum_\sigma \pm \prod_{i} v_{i,\sigma(i)}$ where $\sigma$ varies over bijective functions $I\to J'$ (with suitable choice of signs) we see that for some bijective $\tau:I\to J'$ we have $\prod_{i} v_{i,\tau(i)}\ne 0$.
\end{proof}
\begin{lemma}\label{Lemmaprodv} Let $\epsilon>0$. For all but finitely many positive integers $n$ we have $\prod_{p|n}v_p(n) < n^\epsilon$.
\end{lemma}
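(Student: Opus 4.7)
The plan is to take logarithms and transform the claim into showing $\sum_{p\mid n}\log v_p(n) \le \epsilon\log n$ for $n$ large enough, then exploit the identity $\sum_{p\mid n}v_p(n)\log p=\log n$ to bound the sum using the trivial inequality $\log v_p(n)\le v_p(n)\log p$. The point is that this trivial bound is very wasteful unless $v_p(n)$ is large, so one splits the sum according to whether $v_p(n)$ is small or large.

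More precisely, I would first choose an integer $k_0=k_0(\epsilon)$ large enough that $\log k\le (\epsilon/2)\cdot k\log 2$ for all $k\ge k_0$; this is possible because $\log k/k\to 0$. Then, for any $n\ge 2$, split the primes dividing $n$ into
\[
S_1=\{p\mid n : v_p(n)<k_0\}, \qquad S_2=\{p\mid n : v_p(n)\ge k_0\}.
\]
For $p\in S_2$, since $\log k\le (\epsilon/2) k\log 2\le (\epsilon/2) k\log p$ when $k=v_p(n)$, one has $\log v_p(n)\le (\epsilon/2)\cdot v_p(n)\log p$. Summing over $S_2$ and using $\sum_{p\mid n}v_p(n)\log p=\log n$ yields
\[
\sum_{p\in S_2}\log v_p(n)\le \frac{\epsilon}{2}\sum_{p\mid n}v_p(n)\log p = \frac{\epsilon}{2}\log n.
\]
For $p\in S_1$, each term satisfies $\log v_p(n)\le \log k_0$, and there are at most $\omega(n)$ such primes. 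By Lemma \ref{Lemmaw}, $\omega(n)=O(\log n/\log\log n)$, so
\[
\sum_{p\in S_1}\log v_p(n)\le \omega(n)\log k_0 = O\!\left(\frac{\log k_0\cdot \log n}{\log\log n}\right).
\]
Since $k_0$ is fixed (depending only on $\epsilon$), this last quantity is at most $(\epsilon/2)\log n$ for all sufficiently large $n$. Adding the two bounds gives $\sum_{p\mid n}\log v_p(n)\le \epsilon\log n$, i.e., $\prod_{p\mid n}v_p(n)\le n^\epsilon$, for all but finitely many $n$, with strict inequality holding for $n$ large enough (or after replacing $\epsilon$ by a slightly smaller value).

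There is no serious obstacle here; the only delicate point is balancing the threshold $k_0$ correctly so that the contributions from $S_1$ and $S_2$ each absorb half of the $\epsilon\log n$ budget. The quantitative control in Lemma \ref{Lemmaw} is precisely what makes the $S_1$ piece negligible compared to $\log n$, and once that is in place the rest is bookkeeping.
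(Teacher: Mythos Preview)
Your proof is correct. The paper, however, takes a much shorter route: it simply observes that
\[
\prod_{p\mid n} v_p(n) \;\le\; \prod_{p\mid n} \bigl(v_p(n)+1\bigr) \;=\; \sigma_0(n),
\]
where $\sigma_0(n)$ is the number of positive divisors of $n$, and then invokes the standard divisor bound $\sigma_0(n)=O(n^\epsilon)$. So the paper reduces the lemma to a classical fact in one line, whereas you give a self-contained argument by splitting on a threshold $k_0$ and using Lemma~\ref{Lemmaw} to control the small-exponent piece. Your approach is in fact very close to one of the standard proofs of the divisor bound itself, so in effect you are unpacking what the paper cites as a black box. The paper's argument is shorter and cleaner if one is willing to quote $\sigma_0(n)=O(n^\epsilon)$; yours has the advantage of being essentially self-contained modulo the elementary estimate on $\omega(n)$.
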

\begin{proof}   Note that $\prod_{p|n}v_p(n)\le \sigma_0(n)$ where $\sigma_0(n)$ is the number of positive divisors of $n$. Thus, the result  follows from standard bounds on $\sigma_0(n)$.
\end{proof}

We remark that a much more precise version of Lemma \ref{Lemmaprodv} is due to de Weger \cite{deWeger}. 

The following result limits how small $\|\psi\|$ can be when $a,b$ are $\psi$-dependent. Note that the condition that $a,b,c$ are not of the form $1,N,q$ with $q$ prime (up to order) from our heuristic in Section \ref{SecHeuristic}, naturally appears here again.
\begin{lemma}\label{LemmaKey} Let $a,b,c$ be coprime positive integers with $a+b=c$, not of the form $(1,8,9)$ or $(1,N,q)$ with $q$ prime (up to order).  Define $r=\omega(abc)-1$.  Let  $\psi_1,...,\psi_{r-1} \in \Tcal^\circ(a,b)$ be linearly independent derivations; in particular, $a$ and $b$ are $\psi_i$-dependent for each $i$. Suppose that there is some number $M$ satisfying $1<M<2$ and $c<\rad(abc)^M$, and let $\mu= (2-M)/(4M)$. Then 
$$
 \prod_{i=1}^{r-1} \|\psi_i\| \ge \frac{c^{\mu}}{\prod_{p|abc} v_p(abc)}.
$$
\end{lemma}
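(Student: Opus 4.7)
The plan is to realize $\Tcal^\circ(a,b)$ as an explicit rank-$(r-1)$ sublattice $L_0 \subset \Z^{r+1}$ via the identification $\psi \mapsto (\psi(\xi_p))_{p \mid abc}$ with $r+1 = \omega(abc)$. Writing $\alpha_p = av_p(a)/p$ for $p\mid a$, $\beta_p = bv_p(b)/p$ for $p\mid b$, and $\gamma_p = cv_p(c)/p$ for $p\mid c$, and rescaling \eqref{EqnAdd} and \eqref{EqnDep} to integer form, $L_0$ becomes the kernel of the $2\times(r+1)$ integer matrix $E$ with rows $(\alpha_p,\beta_p,-\gamma_p)$ and $(b\alpha_p,-a\beta_p,0)$. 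A direct $2\times 2$ minor computation shows: minors on two columns lying in the same $\supp(x)$ vanish, while a minor on $\{p_1,p_2\}$ with $p_i\in\supp(x_i)$ for distinct $x_i\in\{a,b,c\}$ equals $\pm abc\cdot v_{p_1}(x_1)v_{p_2}(x_2)/(p_1p_2)$.

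The central technical input is the divisibility $g\mid (abc/\rad(abc))\cdot V$ for $g$ the gcd of these $2\times 2$ minors and $V:=\prod_{p\mid abc} v_p(abc)$. I would prove this prime-by-prime: for a prime $\ell\mid abc$, say $\ell\mid a$, the minor at columns $\{\ell,q\}$ with $q\mid y\in\{b,c\}$ chosen to minimize $v_\ell(v_q(y))$ contributes $v_\ell(\text{minor})=v_\ell(a)-1+v_\ell(v_\ell(a))+v_\ell(v_q(y))$, and the last two summands together are at most $v_\ell(V_a) + v_\ell(V/V_a) = v_\ell(V)$ since they come from disjoint factors of $V$; for $\ell\nmid abc$ the bound $v_\ell(g)\le v_\ell(V)$ follows from a similar analysis on the numerator factors $v_p$. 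The excluded triples $(1,8,9)$ and $(1,N,q)$ are exactly where this scheme fails: $(1,8,9)$ has only two primes so no nontrivial $\psi_i$ exists, and in $(1,N,q)$ with $q$ prime the equation $\psi(\xi_q)=0$ is forced by $a=1$, making the column choices unavailable.

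In parallel, Cauchy-Binet gives $\det EE^T = (abc)^2(T_aT_b+T_aT_c+T_bT_c)$ with $T_x:=\sum_{p\mid x}v_p(x)^2/p^2\ge 1/\rad(x)^2$; AM-GM on the three pairwise products together with the coprimality identity $\rad(a)\rad(b)\rad(c)=\rad(abc)$ yields $\det EE^T \ge (abc)^2/\rad(abc)^{4/3}$, so $d(L_0)=\sqrt{\det EE^T}/g \ge \rad(abc)^{1/3}/V$. To convert this into a bound on $\prod\|\psi_i\|$, the Gram-determinant identity $\det(\Psi\Psi^T)=\sum_I(\det\Psi_I)^2\ge d(L_0)^2$ (since the $\psi_i$ span a sublattice of $L_0$) combined with Hadamard $\det(\Psi\Psi^T)\le (r+1)^{r-1}\prod\|\psi_i\|^2$ gives $\prod\|\psi_i\|\ge d(L_0)/(r+1)^{(r-1)/2}$. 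The hypothesis $c<\rad(abc)^M$ gives $d(L_0)\ge c^{1/(3M)}/V$, and after balancing the $(r+1)^{(r-1)/2}$ factor against the $\rad(abc)^{1/3}$ gain---using Lemma \ref{Lemmaw} to control $\omega(abc) = O(\log c/\log\log c)$---one recovers the stated bound. The hardest step is the prime-by-prime upper bound on $g$, in particular verifying that the excluded triples cover exactly the degenerate cases; a secondary obstacle is obtaining a sharp enough successive-minima bound for $L_0$ to absorb the Hadamard factor, which is why the specific exponent $\mu=(2-M)/(4M)$---strictly less than the idealized $1/(3M)$---is tailored to accommodate this loss.
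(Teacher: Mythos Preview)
Your lattice-covolume approach is quite different from the paper's, and unfortunately the final ``balancing'' step does not go through. The Hadamard loss $(r+1)^{(r-1)/2}$ is far too large to be absorbed into the gap between $1/(3M)$ and $\mu$. Concretely, with $k=r+1=\omega(abc)$ one has $k\le (1+o(1))\log\rad(abc)/\log\log\rad(abc)$, and hence
\[
\log\bigl(k^{(k-2)/2}\bigr)\;\le\;\tfrac{k}{2}\log k\;\le\;\bigl(\tfrac12+o(1)\bigr)\log\rad(abc),
\]
so in the worst case $(r+1)^{(r-1)/2}$ is as large as $\rad(abc)^{1/2+o(1)}$. Your covolume bound only gives $d(L_0)\ge \rad(abc)^{1/3}/V$, so after dividing by the Hadamard factor you are left with a \emph{negative} power of $\rad(abc)$, i.e.\ a trivial bound. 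The exponent gap you are trying to exploit is $1/(3M)-\mu=(3M-2)/(12M)\in(1/12,1/6)$, far smaller than the $1/(2M)$ needed to swallow $\rad(abc)^{1/2}$ via $\rad(abc)>c^{1/M}$. Lemma~\ref{Lemmaw} does not help here: it bounds $\omega(abc)$, not $\omega(abc)^{\omega(abc)}$. (There is also the minor issue that the lemma is an exact inequality, not an ``all but finitely many'' statement, so asymptotic estimates would need to be made effective.)

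The paper avoids this loss entirely by a different mechanism. The key observation is that \eqref{EqnAdd} and \eqref{EqnDep} combine into the symmetric relation
\[
\sum_{p\mid a}\frac{v_p(a)}{p}\psi(\xi_p)=\sum_{p\mid b}\frac{v_p(b)}{p}\psi(\xi_p)=\sum_{p\mid c}\frac{v_p(c)}{p}\psi(\xi_p),
\]
and since $a,b,c$ are coprime, comparing $p$-adic valuations forces $p\mid v_p(abc)\,\psi(\xi_p)$ for every $p\mid abc$ and every $\psi\in\Tcal^\circ(a,b)$. Thus whenever $\psi_i(\xi_p)\ne 0$ one gets directly $v_p(abc)\,\|\psi_i\|\ge p$. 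A short combinatorial lemma (expanding a nonzero determinant as a signed sum over injections) then produces an injection $\tau:\{1,\dots,r-1\}\to\supp(abc)$ with $\psi_i(\xi_{\tau(i)})\ne 0$ for all $i$ and with a prescribed large prime $q'$ in its image; multiplying the resulting inequalities gives $V\cdot\prod_i\|\psi_i\|\ge \prod_i\tau(i)$, a product of $r-1$ of the $r+1$ primes of $abc$ including $q'$. A short case analysis (which is where the exclusions and the exponent $\mu=(2-M)/(4M)$ actually enter, not any Hadamard-type loss) bounds this product below by $c^{\mu}$.
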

\begin{proof} Recall that $\Tcal^\circ(a,b)$ is defined by the conditions \eqref{EqnAdd} and \eqref{EqnDep}. Together they give
\begin{equation}\label{EqnDepDep}
\sum_{p|a} \frac{v_p(a)}{p}\psi(\xi_p) =\sum_{p|b} \frac{v_p(b)}{p} \psi(\xi_p) = \sum_{p|c} \frac{v_p(c)}{p} \psi(\xi_p)
\end{equation}
which holds for every $\psi\in \Tcal^\circ(a,b)$, in particular for each $\psi_i$. In fact,  \eqref{EqnAdd} and \eqref{EqnDep} together are equivalent to \eqref{EqnDepDep}, so, 
$$
\Tcal^\circ(a,b)=\{\psi\in \Tcal : \supp(\psi)\subseteq \supp(abc)\mbox{ and \eqref{EqnDepDep} holds}\}.
$$ 

 We distinguish three cases (Lemma \ref{LemmaCatalan} and our assumptions  imply that there is no other case):
\begin{itemize}
\item[(i)]  Both $ab$ and $c$ have at least $2$ different prime factors each. 
\item[(ii)]  Up to order, we have $(a,b,c)=(1,q^s,N)$ for a prime $q$ and some integer $s\ge 2$ and $N$ with at least two prime factors.
\item[(iii)] $(a,b,c)=(q_1^{s_1}, q_2^{s_2}, q_3^{s_3})$ where  $q_1, q_2, q_3$ are different primes and $s_i\ge 1$ for each $i$.  
\end{itemize}
 
Let us first deal with cases (i) and (ii).

In case (i), suppose that there is some prime $q|abc$ such that $\psi_i(\xi_q)=0$ for each $i$. Then every $\psi\in \Tcal^\circ(a,b)$ would satisfy $\psi(\xi_q)=0$, because the derivations $\psi_1,...,\psi_{r-1}$ generate a finite index subgroup of $\Tcal^{\circ}(a,b)$ (cf. Lemma \ref{LemmaDep}). This is not possible, since the condition $\psi(\xi_q)=0$ is linearly independent from the two equations in \eqref{EqnDepDep} that define $\Tcal^\circ(a,b)$. This proves that in case (i), for each prime $p|abc$ we have $(\psi_i(\xi_p))_i\ne (0,...,0)$.

In case (ii) we note that one of the equations in \eqref{EqnDepDep} is $0=s\psi(\xi_q)/q$, which is equivalent to $\psi(\xi_q)=0$. Therefore, $\Tcal^0(a,b)$ is defined by $\psi(\xi_q)=0$ and $\sum_{p|N} v_p(N)\psi(\xi_p)/p=0$. This last equation is linearly independent from any condition of the form $\psi(\xi_p)=0$ with $p\ne q$ because $N$ has at least two prime factors. This proves that in case (ii), for each $p|abc$ with $p\ne q$ we have $(\psi_i(\xi_p))_i\ne (0,...,0)$.

Let $q'$ be the largest prime factor of $abc$ in case (i), and let it be the largest prime factor of $abc$ subject to the condition $q'\ne q$ in case (ii). In either case, $(\psi_i(\xi_{q'}))_i\ne (0,...,0)$.

Let $I=\{1,...,r-1\}$ and $J=\{ p : p|abc\}$, so that $\#I=r-1 < \#J=r+1$. Choosing the vectors ${\bf v}_i = (\psi_i(\xi_p))_{p\in J}$ for $i\in I$, Lemma \ref{LemmaDet} gives an injective function $\tau:I\to J$ having $q'$ in its image such that for every $i\in I$ we have $\psi_i(\xi_{p_i})\ne 0$ where $p_i:=\tau(i)$.

By coprimality of $a,b,c$ and considering the denominators in \eqref{EqnDepDep}, we see that for each $p|abc$ and each $i$ we have that $p$ divides $v_p(abc)\psi_i(\xi_p)$. Together with the previous non-vanishing, for each $i=1,...,r-1$ we find $v_{p_i}(abc)\|\psi_i\| \ge p_i$. This gives
$$
\prod_{p|abc} v_p(abc)\cdot \prod_{i=1}^{r-1}\|\psi_i\|\ge\prod_{i=1}^{r-1}(v_{p_i}(abc)\|\psi_i\| )\ge \prod_{i=1}^{r-1} p_i =Pq'
$$
where $P$ is the product of the primes $p_i\ne q'$. Let $\ell_1,\ell_2\in J$ be the two primes not in the image of $\tau$. Then $\rad(abc)=P\ell_1\ell_2q'$.

In case (i) we have $\ell_1,\ell_2<q'$ so, $\rad(abc)=P\ell_1\ell_2q'< P\cdot (q')^3\le (Pq')^3$. This proves $\prod_{i=1}^{r-1} p_i \ge \rad(abc)^{1/3}\ge c^{1/(3M)}$, which concludes the proof in case (i). 

In case (ii) notice that $q=\ell_j$ for $j=1$ or $j=2$. Let us assume $q=\ell_1$, in particular, $\ell_2<q'$. Observe that  $\ell_1^2=q^2\le q^s\le c$, so $\ell_1\le c^{1/2}$. Then we get 
$$
c^{1/M}\le \rad(abc)=P\ell_1\ell_2 q' \le P(q')^2c^{1/2} \le (Pq')^2c^{1/2}.
$$
This proves $\prod_{i=1}^{r-1} p_i \ge c^{(2-M)/(4M)}$, which concludes the proof in case (ii). 

Finally, let us consider case (iii). Naturally, one of the primes $q_i$ is $2$ but this will not be relevant. Note that $r=2$, so we need a lower bound for $\|\psi_1\|$. By \eqref{EqnDepDep} we find $s_1\psi_1(\xi_{q_1})/q_1=s_2\psi_1(\xi_{q_2})/q_2=s_3\psi_1(\xi_{q_3})/q_3$ and it follows that $\rad(abc)=q_1q_2q_3$ divides $s_1s_2s_3\psi_1(\xi_{q_1})\psi_1(\xi_{q_2})\psi_1(\xi_{q_3})$. In particular 
$$
\|\psi_1\|^3\cdot\prod_{p|abc} v_p(abc)^3\ge \|\psi_1\|^3\cdot \prod_{p|abc} v_p(abc)\ge \rad(abc)>c^{1/M}
$$
which gives the result in case (iii).
\end{proof}

\subsection{The $abc$ Conjecture implies the Small Derivatives Conjecture}
\begin{theorem}\label{ThmEquiv} If Oesterl\'e's $abc$ Conjecture \ref{ConjWeakabc} holds with some exponent $1<M<2$, then the Small Derivatives Conjecture  \ref{ConjSDC} holds for each exponent $\eta> 1-(2-M)/(4M)$.
\end{theorem}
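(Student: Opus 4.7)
The plan is to combine the Siegel-type upper bound from Theorem \ref{ThmExistence} with the $abc$-powered lower bound of Lemma \ref{LemmaKey}, via a successive-minima analysis of the pair $\Tcal^\circ(a,b) \subseteq \Tcal(a,b)$. After discarding finitely many triples (those of the form $(1,N,q)$ with $q$ prime, excluded by hypothesis, plus the isolated $(1,8,9)$ and the $\omega(abc) \le 2$ cases handled by Lemma \ref{LemmaCatalan}), I may assume $r := \omega(abc) - 1 \ge 2$ and that Lemma \ref{LemmaKey} applies with the fixed exponent $M$. Embedding $\Tcal(a,b) \hookrightarrow \Z^{r+1}$ via $\psi \mapsto (\psi(\xi_p))_{p|abc}$ makes $\|\cdot\|$ coincide with $\|\cdot\|_\infty$ on this rank-$r$ lattice. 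Let $\lambda_1 \le \cdots \le \lambda_r$ and $\lambda_1^\circ \le \cdots \le \lambda_{r-1}^\circ$ denote the successive minima of $\Tcal(a,b)$ and $\Tcal^\circ(a,b)$; since $\Tcal^\circ \subseteq \Tcal$, one has $\lambda_i^\circ \ge \lambda_i$ for $i \le r-1$, and Theorem \ref{ThmExistence} together with Lemma \ref{Lemmaw} yields $\prod_{i=1}^r \lambda_i \le c^{1+o(1)}$. Setting $\mu = (2-M)/(4M)$ and $\nu := \min\{\|\psi\| : \psi \in \Tcal(a,b) \setminus \Tcal^\circ(a,b)\}$, it will suffice to show $\nu \le c^{1-\mu+o(1)}$, which delivers the Small Derivatives Conjecture for any $\eta > 1-\mu$.

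I then split on whether the two sequences of successive minima agree up to index $r-1$. In \textbf{Case A}, where $\lambda_i^\circ = \lambda_i$ for all $i \le r-1$, I can pick $r-1$ linearly independent vectors $v_1, \ldots, v_{r-1} \in \Tcal^\circ(a,b)$ with $\|v_i\| \le \lambda_i$; Lemma \ref{LemmaKey} applied to them (absorbing $\prod_{p|abc} v_p(abc) \le c^{o(1)}$ via Lemma \ref{Lemmaprodv}) gives $\prod_{i=1}^{r-1} \lambda_i \ge c^{\mu - o(1)}$, and combining with the upper bound on $\prod_{i=1}^r \lambda_i$ yields $\lambda_r \le c^{1-\mu+o(1)}$. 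A vector achieving $\lambda_r$ and linearly independent from $v_1, \ldots, v_{r-1}$ must lie outside the rank-$(r-1)$ lattice $\Tcal^\circ(a,b)$, whence $\nu \le \lambda_r$. In \textbf{Case B}, where $\lambda_{i_0}^\circ > \lambda_{i_0}$ for some $i_0 \le r-1$, the $i_0$ linearly independent vectors of norm $\le \lambda_{i_0}$ in $\Tcal(a,b)$ (guaranteed by the definition of $\lambda_{i_0}$) cannot all lie in $\Tcal^\circ(a,b)$, else $\lambda_{i_0}^\circ \le \lambda_{i_0}$; at least one thus witnesses $\nu \le \lambda_{i_0} \le \lambda_{r-1}$. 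Integrality ($\lambda_i \ge 1$) yields $\lambda_{r-1}^2 \le \lambda_{r-1}\lambda_r \le \prod_{i=1}^r \lambda_i \le c^{1+o(1)}$, hence $\lambda_{r-1} \le c^{1/2+o(1)} \le c^{1-\mu+o(1)}$ since $\mu \le 1/4$.

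The main obstacle is the asymmetry in Lemma \ref{LemmaKey}, which only controls products over $r-1$ linearly independent vectors \emph{inside} $\Tcal^\circ(a,b)$; this is what forces the case split above. The lucky feature that resolves Case B is that when the successive minima of $\Tcal^\circ$ and $\Tcal$ disagree, a short derivation outside $\Tcal^\circ$ appears for free from the gap, and the elementary integer-lattice bound $\lambda_{r-1} \le c^{1/2 + o(1)}$ alone already beats $c^{1-\mu+o(1)}$ thanks to the bound $\mu \le 1/4$ coming from $M \ge 1$.
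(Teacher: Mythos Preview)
Your proof is correct and follows essentially the same route as the paper's: both arguments combine the upper bound on $\prod_i \|\psi_i\|$ from Theorem~\ref{ThmExistence} with the lower bound from Lemma~\ref{LemmaKey}, splitting into two cases according to whether the shortest independent vectors in $\Tcal(a,b)$ already lie in $\Tcal^\circ(a,b)$ (your Case~A, the paper's case~(b)) or not (your Case~B, the paper's case~(a)). The only difference is presentational: you phrase everything in terms of successive minima of the two lattices, whereas the paper works directly with the sorted list $\psi_1,\dots,\psi_r$ produced by Theorem~\ref{ThmExistence} and the least index $i_0$ with $\psi_{i_0}\notin\Tcal^\circ(a,b)$.
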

Let us remark  that for $1<M<2$ the quantity $\mu=(2-M)/(4M)$ satisfies $3/4<1-\mu < 1$. We see that any exponent $\eta>1-\mu$ sufficiently close to $1-\mu$ satisfies $\eta<1$, hence, it is admissible for the Small Derivatives Conjecture  \ref{ConjSDC}.
\begin{proof}[Proof of Theorem \ref{ThmEquiv}] We assume that Oesterl\'e's $abc$ Conjecture \ref{ConjWeakabc} holds for some exponent $M$ with $1<M<2$. Let us fix $\epsilon>0$. In the argument below, we may need to implicitly discard finitely many triples $(a,b,c)$ for some inequalities to hold, which we indicate by writing ``$\le_*$'' instead of ``$\le$''. The finitely many discarded triples will only depend on $M$ and $\epsilon$.

Let $a,b$ be coprime positive integers, set $c=a+b$, and assume that $(a,b,c)$ is not of the form $(1,N,q)$ with $q$ prime, up to order. 

Let $\psi_1,...,\psi_r\in \Tcal(a,b)$ be as provided by Theorem \ref{ThmExistence} and label these derivations in such a way that $\|\psi_1\|\le \|\psi_2\|\le ...\le \|\psi_r\|$. Let $i_0\in\{1,2,...,r\}$ be the least index such that $\psi_{i_0}\notin \Tcal^\circ(a,b)$, which exists by Lemma \ref{LemmaDep}. We distinguish two cases:

\begin{itemize}
\item[(a)] $i_0<r$. In this case, using Lemma \ref{Lemmaw} we get $\|\psi_{i_0}\|\le_* c^{(1+\epsilon)/2}$ because
$$
\|\psi_{i_0}\|^{2}\le \prod_{i=i_0}^r\|\psi_{i}\|\le \frac{\omega(abc)}{2\log 2} c\log c\le_* c^{1+\epsilon}.
$$

\item[(b)] $i_0=r$. In this case we have $\psi_1,...,\psi_{r-1}\in \Tcal^\circ(a,b)$ and we can apply Lemma \ref{LemmaKey} because we are assuming Conjecture \ref{ConjWeakabc} for some exponent $1<M<2$. Let us define $\mu=  (2-M)/(4M)$.  Lemmas \ref{Lemmaw} and \ref{Lemmaprodv} give $\|\psi_r\| \le_* c^{1-\mu + \epsilon}$ because
$$
c^{\mu-\epsilon/2} \cdot \|\psi_r\| \le_* \frac{c^\mu}{\prod_{p|abc} v_p(abc)} \cdot \|\psi_r\| \le \prod_{i=1}^r \|\psi_i\|\le \frac{\omega(abc)}{2\log 2} c\log c\le_* c^{1+\epsilon/2}.
$$

\end{itemize}
The second case is the one giving the worst bound, hence the result.
\end{proof}

In particular, Lemma \ref{LemmaEquiv} and Theorem \ref{ThmEquiv} give:

\begin{corollary}\label{CoroEquiv} The Masser-Oesterl\'e $abc$ Conjecture \ref{Conjabc} implies the Small Derivative Conjecture \ref{ConjSDC}. Conversely, the Small Derivative Conjecture \ref{ConjSDC} implies Oesterl\'e's $abc$ Conjecture \ref{ConjWeakabc}.
\end{corollary}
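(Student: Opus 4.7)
The plan is to stitch together the two technical results preceding the statement, namely Lemma \ref{LemmaEquiv} and Theorem \ref{ThmEquiv}, with the only nontrivial point being a translation between the two formulations of the $abc$ Conjecture.

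First, the implication \emph{Small Derivative Conjecture implies Oesterl\'e's $abc$ Conjecture} is delivered immediately by Lemma \ref{LemmaEquiv}: any valid exponent $0<\eta<1$ in Conjecture \ref{ConjSDC} yields an admissible exponent $M>1/(1-\eta)$ for Conjecture \ref{ConjWeakabc}.

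For the converse, \emph{Masser-Oesterl\'e $abc$ Conjecture implies Small Derivative Conjecture}, I would first deduce from Conjecture \ref{Conjabc} a bound of the form $c<\rad(abc)^M$ with $1<M<2$ valid for all but finitely many coprime triples $a+b=c$. Concretely, fix a small $\epsilon>0$ and then any $M$ with $1+\epsilon<M<2$; the Masser-Oesterl\'e bound $c<\kappa_\epsilon\rad(abc)^{1+\epsilon}$ upgrades to $c<\rad(abc)^M$ precisely when $\rad(abc)\ge\kappa_\epsilon^{1/(M-1-\epsilon)}$, which excludes only finitely many triples (since in the exceptional range $\rad(abc)$, and hence $c$, is bounded).

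This \emph{almost} gives the hypothesis of Theorem \ref{ThmEquiv} as stated. The small gap is that Theorem \ref{ThmEquiv} requires Oesterl\'e's Conjecture for every triple, whereas above I only produce it for all but finitely many; however, a quick inspection of the proof of Theorem \ref{ThmEquiv} reveals that the Oesterl\'e bound is invoked through Lemma \ref{LemmaKey} one triple at a time, so the conclusion persists after enlarging its finite exceptional set by the handful of triples just discarded. Letting $M\to 1^+$, the resulting admissible exponent $\eta>1-(2-M)/(4M)$ can be made arbitrarily close to $3/4$, so we obtain Conjecture \ref{ConjSDC} with any $\eta>3/4$. The only point that merits a second look is the absorption of the implicit constant $\kappa_\epsilon$ into the exponent; this is routine, and I do not foresee any genuine obstacle.
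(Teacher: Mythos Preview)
Your proposal is correct and follows the same route as the paper, which simply records that the corollary is obtained by combining Lemma \ref{LemmaEquiv} and Theorem \ref{ThmEquiv}. You are in fact more careful than the paper on one point: the hypothesis of Theorem \ref{ThmEquiv} asks for Oesterl\'e's bound with an exponent $1<M<2$ valid for \emph{all} triples, and you correctly observe that the Masser--Oesterl\'e conjecture only yields this after discarding finitely many triples (absorbing $\kappa_\epsilon$ into the exponent). Your fix---that Lemma \ref{LemmaKey} is applied one triple at a time inside the proof of Theorem \ref{ThmEquiv}, and that the Small Derivatives Conjecture already allows a finite exceptional set---is exactly right and closes the gap cleanly.
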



\section{Differentials of rings over monoids}\label{SecFinal}

\subsection{Definitions} Let $A$ be a commutative unitary ring, let $R$ be a commutative monoid, and let $\alpha:R\to A$ be a morphism of monoids with $A$ taken as a multiplicative monoid.  Given an $A$-module $U$, a \emph{$U$-valued $\alpha$-derivation on $A$} is a function $D:A\to U$  satisfying
\begin{itemize}
\item[(Diff1)] $R$-triviality: $D(\alpha(r))=0$ for all $r\in R$
\item[(Diff2)] Leibniz rule: $D(ab)=aD(b)+bD(a)$ for all $a,b\in A$.
\end{itemize}

A \emph{differential $(A,\alpha)$-module}  is a pair $(U,D)$ where $U$ is an $A$-module and $D$ is a $U$-valued $\alpha$-derivation on $A$. 

Naturally, these definitions can also be formulated when $A$ is just assumed to be a commutative monoid, which is perhaps better suited for the theory of monoid schemes (``geometry over $\F_1$'', cf. \cite{Deitmar}). However, we keep the assumption that $A$ be a ring to simplify the exposition and because this is the case of interest for us. Another observation is that when $R=\{1\}$ we recover the notion of \emph{absolute derivation} from \cite{KOW} and, in fact, most of that theory can be generalized to our setting.

One directly checks
\begin{lemma} Let $(U,D)$ be a differential $(A,\alpha)$-module. We have:
\begin{itemize}
\item[(i)] $D(0)=D(1)=0$.
\item[(ii)] For all $r\in R$ and $b\in A$ we have $D(\alpha(r)b)=\alpha(r)D(b)$.
\item[(iii)] Given $a\in A$ and a positive integer $n$, we have $D(a^n)=na^{n-1}D(a)$.
\item[(iv)] Given $u\in A^\times$ and a positive integer $n$, we have $D(u^{-n})=-nu^{-(n+1)}D(u)$.
\end{itemize}
\end{lemma}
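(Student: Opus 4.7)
The plan is to derive all four identities directly from the two axioms (Diff1) and (Diff2), without invoking any further structure. Since the statements are elementary, the work amounts to careful bookkeeping with the Leibniz rule and some standard inductive arguments.

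For (i), I would first observe that $R$ has an identity element $1_R$, and since $\alpha$ is a morphism of monoids we have $\alpha(1_R)=1_A=1$; hence (Diff1) forces $D(1)=0$. For $D(0)$, the Leibniz rule applied to the product $0=0\cdot 0$ gives $D(0)=0\cdot D(0)+0\cdot D(0)=0$. Part (ii) is then immediate: applying (Diff2) to $\alpha(r)\cdot b$ and using (Diff1) to kill $D(\alpha(r))$ yields $D(\alpha(r)b)=\alpha(r)D(b)+bD(\alpha(r))=\alpha(r)D(b)$.

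For (iii), I would argue by induction on $n\ge 1$. The base case $n=1$ reads $D(a)=1\cdot a^0\cdot D(a)$, which holds since $a^0=1$. For the inductive step, (Diff2) applied to $a^{n+1}=a\cdot a^n$ together with the inductive hypothesis gives $D(a^{n+1})=aD(a^n)+a^nD(a)=a\cdot na^{n-1}D(a)+a^n D(a)=(n+1)a^n D(a)$. For (iv), I would combine (i) and (iii): differentiating the identity $u^n\cdot u^{-n}=1$ via (Diff2) gives $u^nD(u^{-n})+u^{-n}D(u^n)=D(1)=0$, and substituting $D(u^n)=nu^{n-1}D(u)$ from (iii) and multiplying through by $u^{-n}$ yields $D(u^{-n})=-nu^{-(n+1)}D(u)$.

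There is no real obstacle here; the only thing to watch is the convention $a^0=1$ in the base case of (iii), which is consistent with (i), and the fact that in (iv) one uses the invertibility of $u$ to cancel the factor $u^{2n}$ on both sides. Every computation is a single line once the relevant axiom is invoked, so the proof will be short.
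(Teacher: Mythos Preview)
Your argument is correct and is exactly the routine verification the paper has in mind; the paper itself omits the proof entirely with the phrase ``One directly checks,'' and your line-by-line use of (Diff1) and (Diff2) is the intended computation. The only tiny slip is cosmetic: in (iv) you do not cancel a factor $u^{2n}$ but rather multiply the identity $u^n D(u^{-n})=-nu^{-1}D(u)$ through by $u^{-n}$, which is precisely what your displayed computation does anyway.
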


Given differential $(A,\alpha)$-modules $(U,D)$ and $(V,E)$, a morphism of differential $(A,\alpha)$-modules is a morphism of $A$-modules $f:U\to V$ that satisfies $E=f\circ D$. We obtain a category of differential $(A,\alpha)$-modules which we denote by $\Phi_{(A,\alpha)}$.

For an $A$-module $U$, let $\Der_{(A,\alpha)}(U)=\{D:A\to U \, : (U,D)\in Ob(\Phi_{(A,\alpha)})\}$. This is an $A$-module with the structure induced by $U$.  Given $A$-modules $U$ and  $V$  and a morphism $f\in \Hom_A(U,V)$, we define $\Der_{(A,\alpha)}(f):\Der_{(A,\alpha)}(U)\to \Der_{(A,\alpha)}(V)$ by $\Der_{(A,\alpha)}(f)(D)=f\circ D$. 
\begin{lemma}
The rule $\Der_{(A,\alpha)}$ defines a functor $A\mbox{-}\mathbf{Mod}\to A\mbox{-}\mathbf{Mod}$.
\end{lemma}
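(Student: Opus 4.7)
The plan is to verify the functor axioms one by one, all of which reduce to routine diagram chases once we unwind definitions. The only nontrivial point is to check that $\Der_{(A,\alpha)}(f)$ is well-defined, i.e., that post-composing an $\alpha$-derivation by an $A$-linear map yields again an $\alpha$-derivation, and that the resulting assignment is $A$-linear.

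First I would fix $f\in\Hom_A(U,V)$ and $D\in \Der_{(A,\alpha)}(U)$, and verify that $E:=f\circ D:A\to V$ is a $V$-valued $\alpha$-derivation. Property (Diff1) follows from $E(\alpha(r))=f(D(\alpha(r)))=f(0)=0$, and property (Diff2) follows from
$$
E(ab)=f(D(ab))=f(aD(b)+bD(a))=af(D(b))+bf(D(a))=aE(b)+bE(a),
$$
where the second-to-last equality uses $A$-linearity of $f$. Hence $\Der_{(A,\alpha)}(f)$ is a well-defined set-theoretic map $\Der_{(A,\alpha)}(U)\to \Der_{(A,\alpha)}(V)$.

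Next, I would check $A$-linearity of $\Der_{(A,\alpha)}(f)$. Recall that the $A$-module structure on $\Der_{(A,\alpha)}(U)$ is defined pointwise, via the $A$-module structure on $U$. For $a\in A$ and $D_1,D_2\in\Der_{(A,\alpha)}(U)$ and every $x\in A$, we compute
$$
\bigl(f\circ(aD_1+D_2)\bigr)(x)=f(aD_1(x)+D_2(x))=af(D_1(x))+f(D_2(x)),
$$
which equals $\bigl(a\cdot(f\circ D_1)+(f\circ D_2)\bigr)(x)$, so $\Der_{(A,\alpha)}(f)(aD_1+D_2)=a\Der_{(A,\alpha)}(f)(D_1)+\Der_{(A,\alpha)}(f)(D_2)$.

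Finally, functoriality itself is immediate from associativity of composition of maps: for $D\in\Der_{(A,\alpha)}(U)$ we have $\Der_{(A,\alpha)}(\mathrm{id}_U)(D)=\mathrm{id}_U\circ D=D$, giving preservation of identities; and for $f\in\Hom_A(U,V)$, $g\in\Hom_A(V,W)$ we have
$$
\Der_{(A,\alpha)}(g\circ f)(D)=(g\circ f)\circ D=g\circ (f\circ D)=\Der_{(A,\alpha)}(g)\bigl(\Der_{(A,\alpha)}(f)(D)\bigr),
$$
which gives preservation of composition. There is no real obstacle; the whole argument is formal, and the only thing one has to be slightly careful about is using $A$-linearity (not just additivity) of $f$ when verifying the Leibniz rule for $f\circ D$.
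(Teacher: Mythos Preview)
Your proof is correct and complete. The paper actually states this lemma without proof, treating it as routine; your verification of well-definedness of $\Der_{(A,\alpha)}(f)$, its $A$-linearity, and the two functor axioms is exactly the expected argument and fills in what the paper leaves implicit.
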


\subsection{Universal object} Consider $\alpha:R\to A$ as before. Let $X_A$ be the free $A$-module on the generators $e_a$ for $a\in A$. Let $M_{(A,\alpha)}\subseteq X_A$ be the sub $A$-module generated by the elements $e_{\alpha(r)}$ for $r\in R$ and $e_{ab}-ae_b-be_a$ for $a,b\in A$. We consider the quotient $A$-module $\Omega_{(A,\alpha)}=X_A/M_{(A,\alpha)}$ and define $\dd_{(A,\alpha)}:A\to \Omega_{(A,\alpha)}$  by $\dd_{(A,\alpha)} (a)= e_a\bmod M_{(A,\alpha)}$. By construction, $(\Omega_{(A,\alpha)},\dd_{(A,\alpha)})$ is a differential $(A,\alpha)$-module. If there is no risk of confusion, we will simply write $\dd$ instead of $\dd_{(A,\alpha)}$.

\begin{lemma}[Universal property of $\Omega_{(A,\alpha)}$]  For each $A$-module $U$, the rule $\psi\mapsto \psi\circ \dd$ defines a functorial isomorphism of $A$-modules $\eta_{U}:\Hom_A(\Omega_{(A,\alpha)}, U)\to \Der_{(A,\alpha)}(U)$.  Thus, $\Omega_{(A,\alpha)}$ represents the functor $\Der_{(A,\alpha)}$. In particular, $(\Omega_{(A,\alpha)},\dd)$ is an initial object in the category $\Phi_{(A,\alpha)}$.
\end{lemma}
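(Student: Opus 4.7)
The plan is to verify the universal property by the standard Kähler-differentials-style argument: first check that $\eta_U$ is a well-defined $A$-linear map, then build an explicit inverse from the presentation of $\Omega_{(A,\alpha)}$.

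First I would observe that $\eta_U$ is well-defined and $A$-linear. If $\psi\in \Hom_A(\Omega_{(A,\alpha)},U)$, then $\psi\circ \dd$ satisfies (Diff1) because $\dd(\alpha(r))=0$ in $\Omega_{(A,\alpha)}$ (the element $e_{\alpha(r)}$ lies in $M_{(A,\alpha)}$ by construction), and it satisfies (Diff2) because $\psi$ is $A$-linear and $\dd$ itself satisfies the Leibniz rule by construction, so $\psi\circ \dd\in \Der_{(A,\alpha)}(U)$. The $A$-linearity of $\eta_U$ is immediate from the definition of the $A$-module structure on $\Der_{(A,\alpha)}(U)$ induced by $U$.

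Next I would construct an inverse. Given $D\in \Der_{(A,\alpha)}(U)$, define an $A$-linear map $\widetilde\psi_D : X_A\to U$ on the free module $X_A$ by $e_a\mapsto D(a)$ and extending $A$-linearly. The key point is that $\widetilde\psi_D$ kills the submodule $M_{(A,\alpha)}$: on generators we compute $\widetilde\psi_D(e_{\alpha(r)})=D(\alpha(r))=0$ by (Diff1), and $\widetilde\psi_D(e_{ab}-ae_b-be_a)=D(ab)-aD(b)-bD(a)=0$ by (Diff2). Hence $\widetilde\psi_D$ factors through an $A$-linear map $\psi_D:\Omega_{(A,\alpha)}\to U$ satisfying $\psi_D\circ \dd=D$, i.e.\ $\eta_U(\psi_D)=D$, giving surjectivity. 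For injectivity, if $\eta_U(\psi)=\psi\circ\dd=0$, then $\psi$ vanishes on every class $\dd(a)=e_a\bmod M_{(A,\alpha)}$; since these classes generate $\Omega_{(A,\alpha)}$ as an $A$-module (they generate $X_A$ freely), we conclude $\psi=0$.

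Finally I would check functoriality: for $f\in \Hom_A(U,V)$ and $\psi\in \Hom_A(\Omega_{(A,\alpha)},U)$, associativity of composition gives
\[
\eta_V(f\circ\psi)=(f\circ\psi)\circ\dd=f\circ(\psi\circ\dd)=f\circ\eta_U(\psi)=\Der_{(A,\alpha)}(f)(\eta_U(\psi)),
\]
which is the required naturality square. The statement that $(\Omega_{(A,\alpha)},\dd)$ is initial in $\Phi_{(A,\alpha)}$ follows by unravelling definitions: for any object $(U,D)\in \Phi_{(A,\alpha)}$, the preimage $\psi_D=\eta_U^{-1}(D)$ is by construction the unique morphism $(\Omega_{(A,\alpha)},\dd)\to (U,D)$ in $\Phi_{(A,\alpha)}$. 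No step here is a genuine obstacle — the proof is entirely formal, and the only thing to be careful about is checking that the relations defining $M_{(A,\alpha)}$ match exactly the axioms (Diff1) and (Diff2), which is precisely how $M_{(A,\alpha)}$ was set up.
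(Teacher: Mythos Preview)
Your proof is correct and follows essentially the same approach as the paper: define the inverse on the free module $X_A$ by $e_a\mapsto D(a)$, check that (Diff1) and (Diff2) force it to vanish on $M_{(A,\alpha)}$, and use that the classes $\dd(a)$ generate $\Omega_{(A,\alpha)}$ for injectivity. Your write-up is in fact slightly more thorough than the paper's, which leaves functoriality, $A$-linearity, and the initial-object claim as ``immediate''.
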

\begin{proof} Functoriality on $U$ and $A$-linearity are immediate. Let us check that $\eta_U$ is an isomorphism.

Let $\psi\in \Hom_A(\Omega_{(A,\alpha)},U)$ with $\eta_U(\psi)=0$. This means that $\psi\circ \dd : A\to U$ is the zero map. The set $\dd(A)$ generates $\Omega_{(A,\alpha)}$ as an $A$-module, so $\psi=0$ because it vanishes on a generating set of  $\Omega_{(A,\alpha)}$. Thus, $\eta_U$ is injective.

Let $D\in \Der_{(A,\alpha)}(U)$. Let $\theta:X_A\to U$ be the $A$-module map determined by $\theta(e_a)=D(a)$ on the standard basis $\{e_a\}_{a\in A}$ of the free $A$-module $X_A$. Let $\tilde{\dd}:A\to X_A$ be the function $\tilde{\dd}(a)=e_a$ and let $\pi:X_A\to X_A/M_{(A,\alpha)}=\Omega_{(A,\alpha)}$ be the quotient map. Note that $\theta\circ \tilde{\dd}=D$ and $\dd=\pi\circ \tilde{\dd}$. Since $D$ satisfies (Diff1) and (Diff2), we have that a generating set for $M_{(A,\alpha)}$ is contained in $\ker(\theta)$, and since $\theta$ is $A$-linear we get $M_{(A,\alpha)}\subseteq \ker(\theta)$. Thus, there is an $A$-module map $\psi:\Omega_{(A,\alpha)}\to U$ with $\theta=\psi\circ\pi$. Therefore, $D=\theta\circ \tilde{\dd} = \psi\circ \pi\circ \tilde{\dd} = \psi\circ \dd =\eta_U(\psi)$, proving that  $\eta_U$ is surjective.
\end{proof}

We call $(\Omega_{(A,\alpha)},\dd)$ the \emph{universal} differential $(A,\alpha)$-module.

\subsection{Examples} We conclude by discussing some concrete examples.

\begin{example} Let $A=\F_q$ be a finite field with $q$ elements and $\alpha:R\to \F_q$ be arbitrary. The elements $\dd(x)$ for $x\in \F_q$ generate  $\Omega_{(\F_q,\alpha)}$, and $\dd(x)=\dd(x^q)=qx^{q-1}\dd(x)=0$. Therefore, $\Omega_{(\F_p,\alpha)}=(0)$.
\end{example}
\begin{example} Let $A=\Z/4\Z$ and let $\alpha:\{1\}\to \Z/4\Z$ be the inclusion. In this case it is not so lengthy to directly compute $M_{(A,\alpha)}\subseteq X_A=(\Z/4\Z)^4$. One finds that the universal $\alpha$-derivation is $\dd:\Z/4\Z\to \Z/2\Z\oplus \Z/2\Z$ defined by $\dd(0)=\dd(1)=(0,0)$, $\dd(2)=(1,0)$, and $\dd(3)=(0,1)$. Note that $ \dd(1)+\dd(2) = (1,0)\ne (0,1) =\dd(3)$, so, $\dd$ is not additive. Nevertheless, let $\sigma:(\Z/2\Z)^2\to \Z/2$ be $\sigma(x,y)=x+y$. Then the $\alpha$-derivation $\sigma\circ \dd:\Z/4\Z\to \Z/2\Z$ respects the equation $1+2=3$.
\end{example}
\begin{example} Let $A$ be a UFD and let $T$ be a set of pairwise non-associated irreducible elements. Let $R=A - \cup_{t\in T} (t)$, let $\alpha:R\to A$ be the inclusion, and let $U=\bigoplus_{t\in T} A$. Define $D:A\to U$ by $D(a)=(v_t(a)\cdot at^{-1})_{t\in T}$  where $v_t$ is the $t$-adic valuation. Then $D:A\to U$ is an $\alpha$-derivation and we claim it is the universal one. Indeed, given $a=rt_1^{n_1}\cdots t_k^{n_k}\in A$ with $r\in R$, $n_j\ge 1$, and $t_j\in T$ different, the map $\dd=\dd_{(A, \alpha)}$ satisfies $\dd(a)=\sum_{j=1}^k n_j at_j^{-1}\dd(t_j)$. Since $U$ is free, there is an $A$-module map $\phi: U\to \Omega_{(A,\alpha)}$ satisfying $\dd=\phi\circ D$. We conclude by universality of $\Omega_{(A,\alpha)}$.
\end{example}
\begin{example} In the previous example, consider the special case $A=\Z$ and $T$ the set of all prime numbers, so that $R=\{-1,1\}$. Then $D:A\to U$ turns out to be our map $\dd:\Z\to \Omega$. So,  the latter  is the universal $\alpha$-derivation  when $\alpha:\{-1,1\}\to \Z$ is the inclusion. Thus, $\Hom_\Z(\Omega,\Z)\simeq \Der_{(\Z,\alpha)}(\Z)$ is the module of all $\alpha$-derivations $D:\Z\to \Z$. Our $\Z$-module $\Tcal$  is a metrized version of this.
\end{example}

%
\section{Acknowledgments}

This research was supported by ANID (ex CONICYT) FONDECYT Regular grant 1190442 from Chile. 

The initial motivation for this project was a conversation with Thanases Pheidas that took place at the 2016 Oberwolfach workshop \emph{Definability and Decidability Problems in Number Theory}. I heartily thank the MFO for their support and hospitality, as well as T. Pheidas for bringing  the topic of arithmetic derivatives to my attention. 

Comments by Jerson Caro and Natalia Garcia-Fritz on a first version of this manuscript are gratefully acknowledged. I also thank the referee for carefully reading this article and for  valuable suggestions and corrections.


\end{document}